\newtheorem{thm}{Theorem}[section]
\newtheorem{lem}[thm]{Lemma}
\theoremstyle{definition}
\newtheorem{rem}[thm]{Remark}
\numberwithin{equation}{section}
\DeclareMathOperator{\supp}{supp}
\newcommand{\eps}{\varepsilon}
\begin{document}
\title{Positive entropy implies chaos along any infinite sequence}
\author{Wen Huang, Jian Li and Xiangdong Ye}

\dedicatory{Dedicated to the memory of Anatoly Mikhailovich Stepin (20.07.1940 -- 07.11.2020)}
 
\address[W.~Huang, X.~Ye]{CAS Wu Wen-Tsun Key Laboratory of Mathematics, School of Mathematical
Sciences, University of Science and Technology of China,
	Hefei, Anhui, 230026, P.R. China}
 \email{wenh@mail.ustc.edu.cn,
	yexd@ustc.edu.cn}
\address[J.~Li]{Department of Mathematics, Shantou University, Shantou, Guangdong 515063, P.R. China}
\email{lijian09@mail.ustc.edu.cn}
\subjclass[2010]{37B05,37B40,37A35}
\keywords{Li-Yorke chaos, topological entropy, measure-theoretic entropy,
	amenable group action}

\begin{abstract}
Let $G$ be an infinite countable discrete amenable group.
For any $G$-action on a compact metric space $(X,\rho)$, it turns out that
if the action has positive topological entropy, then for any
sequence $\{s_i\}_{i=1}^{+\infty}$ with pairwise distinct elements in $G$
there exists a Cantor subset $K$ of $X$ which is Li-Yorke chaotic along this sequence,
that is, for any two distinct points $x,y\in K$, one has
\[\limsup_{i\to+\infty}\rho(s_i x,s_iy)>0,\ \text{and}\ \liminf_{i\to+\infty}\rho(s_ix,s_iy)=0.\]
\end{abstract}

\maketitle

\section{Introduction}
Throughout this paper, let $G$ be a countable, discrete, infinite, amenable group
with the identity element $e_G$.
By a \emph{$G$-system} we mean a pair $(X,G)$,
where $X$ is a compact metrizable space with a metric $\rho$ and
a continuous action $\Gamma\colon G\times X\to X$
such that $\Gamma(e_G,x)=x$ and $\Gamma(g_1, \Gamma(g_2,x))=
\Gamma(g_1g_2,x)$ for all $g_1,g_2\in G$ and $x\in X$.
As usual, we let $gx=\Gamma(g,x)$ for simplicity.
In the case $G = \mathbb{Z}$,
we can describe the action as a homeomorphism $T\colon X\to X$
 which corresponds to the generator $1$ in $\mathbb{Z}$ and
generates an action $n\mapsto T^n$ through iterations.

Let $S$ be an infinite subset of $G$ and enumerate it as a sequence $\{s_i\}_{i=1}^\infty$,
and $\delta>0$ be a constant.
A pair $(x,y)\in X\times X$ is called \emph{$(S,\delta)$-scrambled}
if
\[\limsup_{i\to+\infty}\rho(s_ix,s_iy)>\delta\quad\text{ and }\quad
\liminf_{i\to+\infty}\rho(s_ix,s_iy)=0.\]
As $S$ is countable, it is easy to see that the above limits
do not depend on the order of elements of $S$.
A subset $K$ of $X$ with at least two points is called \emph{$(S,\delta)$-scrambled} if
every two distinct points $x,y\in K$ form an $(S,\delta)$-scrambled pair.
Following ideas in \cite{LY75}, we say that $(X,G)$ is \emph{$(S,\delta)$-Li-Yorke chaotic}
if there exists an uncountable $(S,\delta)$-scrambled subset of $X$.

Given a $G$-system $(X,G)$, one can define its topological entropy
$h_{top}(X,G)$ lying in $[0,+\infty]$ (see Section 2.2 for details).
One of fundamental questions in topological dynamics is:
if a dynamical system has positive topological entropy,
how complicated 
its orbits can be.
There are many results in this line.
We refer the reader to a recent survey \cite{LY16}.

Here let us mention a few related results.
In \cite{BGKM02} using the measure-theoretical argument Blanchard \textit{et al.} showed that
if a $\mathbb{Z}$-action system $(X,T)$ has positive topological entropy
then there exists a $\delta>0$ and a Cantor $(\mathbb{Z}_+,\delta)$-scrambled subset of $X$.
For a while, people were seeking a proof which does not involve measures. It was done by
Kerr and Li who used a combinatorial method giving an alternative proof and  generalized it to
amenable group actions in \cite{KL07}, and  to sofic group actions in \cite{KL13}.
For a related work, see the recent paper by Li and Rong \cite{LR20}.

The main result of this paper is the following result:

\begin{thm} \label{thm:main-result-1}
If a $G$-system $(X,G)$ has positive topological entropy,
then for any infinite subset $S$ of $G$ there exists a $\delta>0$ and a Cantor $(S,\delta)$-Li-Yorke scrambled set.
\end{thm}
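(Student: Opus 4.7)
The plan is to combine the IE-pair machinery of Kerr--Li for amenable group actions with a novel extraction lemma that locates infinite independence sets inside the prescribed sequence $S$. Since $h_{top}(X,G)>0$, the Kerr--Li theory yields a non-diagonal IE-pair $(x_1^*,x_2^*)\in X\times X$. Choosing disjoint closed neighborhoods $A_1\ni x_1^*$ and $A_2\ni x_2^*$, the pair $(A_1,A_2)$ has positive independence density: there exist $c>0$ and a F\o lner sequence $\{F_n\}$ such that each $F_n$ contains an independence set $J_n$ for $(A_1,A_2)$ of size at least $c|F_n|$. Set $\delta:=\rho(A_1,A_2)/3>0$.

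The crux of the argument is the following \emph{extraction lemma}: for any pair $(A_1,A_2)$ of disjoint closed subsets with positive independence density, every infinite subset $S\subseteq G$ contains an infinite independence set for $(A_1,A_2)$. By the finite character of the independence property, it suffices to produce in $S$ independence sets of arbitrarily large finite cardinality; an infinite one then follows by a standard compactness argument. I would prove this sub-claim by passing to the subshift $\pi(X)\subseteq\{1,2,\star\}^{G}$ defined by $\pi(x)(g)=j$ iff $gx\in A_j$ (and $\star$ otherwise), and combining a Sauer--Shelah/Karpovsky--Milman type shattering estimate with Ornstein--Weiss quasi-tiling, in order to transfer positive density of shattering from each F\o lner set $F_n$ to its intersection with $S$ for a carefully chosen sequence of F\o lner averages.

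Granting this lemma, fix a decreasing sequence of open neighborhood pairs $(V_1^{(k)},V_2^{(k)})$ with $V_j^{(k)}\subseteq A_j$, $\mathrm{diam}(V_j^{(k)})\to 0$, and $\bigcap_k V_j^{(k)}=\{x_j^*\}$; each such pair still has positive independence density since it contains the IE-pair. Applying the extraction lemma inductively and diagonalizing yields a subsequence $t_1,t_2,\ldots$ of $\{s_i\}_{i=1}^{+\infty}$ such that $\{t_k,t_{k+1},\ldots\}$ is an independence set for $(V_1^{(k)},V_2^{(k)})$ for every $k$. A standard Kuratowski--Mycielski construction then produces, for every $\sigma\in\{1,2\}^{\N}$, a point $z_\sigma\in X$ with $t_k z_\sigma\in\overline{V_{\sigma(k)}^{(k)}}$ for all $k$, and the map $\sigma\mapsto z_\sigma$ embeds a Cantor set into $X$. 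For distinct $\sigma,\tau$: at indices $k$ with $\sigma(k)\neq\tau(k)$ one has $\rho(t_k z_\sigma,t_k z_\tau)>\delta$, whereas at indices $k$ with $\sigma(k)=\tau(k)$ one has $\rho(t_k z_\sigma,t_k z_\tau)\le\mathrm{diam}(V_{\sigma(k)}^{(k)})\to 0$. Restricting to a Cantor sub-family of $\sigma$'s that pairwise agree and pairwise disagree at infinitely many indices then delivers the desired $(S,\delta)$-scrambled Cantor subset of $X$.

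The principal obstacle is the extraction lemma: the classical IE-pair theory only controls the density of independence sets inside F\o lner averages, whereas here one must locate such sets inside an arbitrary infinite $S\subseteq G$ which need not meet any given F\o lner set substantially. I expect this step to be the main technical contribution, most plausibly requiring Ornstein--Weiss quasi-tilings combined with a Sauer--Shelah shattering estimate, or alternatively a reformulation of positive independence density in an ultrafilter/compactification framework. Once the extraction lemma is in hand, the remaining diagonalization and Kuratowski--Mycielski steps are essentially the standard passage from IE-pairs to Cantor scrambled sets, now carried out along the sequence $\{t_k\}\subseteq S$ instead of along the full group.
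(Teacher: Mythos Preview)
Your proposal is not a proof but a programme, and the gap you yourself identify---the extraction lemma---is precisely the whole difficulty. Positive independence density is a statement about F{\o}lner sets; an arbitrary infinite $S\subseteq G$ may have lower density zero in every F{\o}lner set, so none of the density information transfers. Neither of your suggested tools addresses this: Sauer--Shelah/Karpovsky--Milman lets you pass from many patterns on a fixed finite set to shattering of a large subset \emph{of that set}, and Ornstein--Weiss quasi-tilings control how F{\o}lner sets decompose into translates of smaller F{\o}lner sets; neither mechanism produces independence sets inside a set that was never assumed to interact with the F{\o}lner structure at all. In fact the paper explicitly remarks (Remark~\ref{rem:proof}) that it is not known how to adapt the Kerr--Li combinatorial method to this theorem, so your extraction lemma is, as far as the authors know, an open problem rather than a missing routine step.

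The paper's own proof is entirely different and measure-theoretic. Via the variational principle one fixes an ergodic $\mu$ with $h_\mu(X,G)>0$, passes to the Pinsker factor, and works with the relatively independent self-joining $\lambda=\mu\times_Y\mu$. The crucial input is Theorem~\ref{thm:entorpy-UPE-extension} (due to Rudolph--Weiss and Danilenko): for any finite partition $\alpha$ and any $\eps>0$ there is a \emph{finite} set $K\subseteq G$ such that whenever $Q\subseteq G$ satisfies $(QQ^{-1}\setminus\{e_G\})\cap K=\emptyset$, the conditional entropy $H_\mu\bigl(\bigvee_{g\in Q}g^{-1}\alpha\,\big|\,\mathcal P_\mu(G)\bigr)$ is within $\eps|Q|$ of $|Q|\,H_\mu(\alpha\,|\,\mathcal P_\mu(G))$. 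The point is that the only constraint on $Q$ is finite-set avoidance in $QQ^{-1}$, and \emph{any} infinite $S$ contains finite subsets $Q$ of arbitrary size satisfying this (greedily pick elements of $S$ one at a time). This is exactly what replaces your missing extraction lemma. From here the authors build partitions of $X\times X$ tailored to the sets $\Delta_r$ and $X\times X\setminus\Delta_r$, use near-additivity of entropy along such $Q_m\subseteq\{s_m,s_{m+1},\dotsc\}$ to force $\mu_y\times\mu_y$-almost every pair to be both $\{s_i\}$-recurrent to $\Delta_r$ and $\{s_i\}$-recurrent to its complement (Lemmas~\ref{lem:not-asy} and~\ref{lem:proximal-r}), and finish with Mycielski's theorem on $\supp(\mu_y)$.

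In short: your outline correctly identifies its own obstacle, but that obstacle is not a lemma awaiting a routine proof---it is the heart of the matter, and the paper circumvents it by abandoning the combinatorial framework entirely in favour of an ergodic-theoretic argument whose key input (Theorem~\ref{thm:entorpy-UPE-extension}) requires only a sparseness condition on $Q$ that every infinite $S$ can meet.
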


Note that this result is new even when $G=\mathbb{Z}$. After reviewing some auxiliary results of $G$-systems in Section 2,
we will prove the main result in Section 3.

\section{Preliminaries}
In this section, we review some notions and  properties  of $G$-systems which will be used later.
We refer the reader to \cite{EW} and \cite{Gl1}
for textbooks on ergodic theory.

Let $X$ be a compact metric space. A subset $K \subset X$ is called \emph{a Mycielski set}
if it is a union of countably many Cantor sets.
This definition was introduced in~\cite{BGKM02}.
For convenience we restate here a version of Mycielski's theorem (see~\cite[Theorem~1]{My})
which we shall use.
Recall that the \emph{diagonal} of $X\times X$ is
\[\Delta_X=\{(x,x)\in X\times X\colon x\in X\}.\]
\begin{thm}\label{thm:Myc}
	Let $X$ be a perfect compact metric space.
	Assume that $R$ is a dense $G_\delta$ subset of $X\times X$.
	Then  there exists a dense Mycielski subset $K$ of $X$  such that
\[K\times K\subset R\cup \Delta_X.\]
\end{thm}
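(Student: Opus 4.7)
The approach I would take is the classical Cantor-scheme construction, adapted to produce countably many Cantor sets simultaneously. First, write $R = \bigcap_{k \geq 1} U_k$ with each $U_k$ open and dense in $X \times X$ (possible because $R$ is $G_\delta$), and fix a countable base $\{B_m\}_{m \geq 1}$ for the topology of $X$. The strategy is to build, for each $m \geq 1$, a Cantor set $K_m \subset B_m$, and then to take $K := \bigcup_m K_m$. This automatically makes $K$ a dense Mycielski subset of $X$, so all the work reduces to arranging that any two distinct points of $K$ lie in every $U_k$, i.e., in $R$.

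The construction proceeds by a joint induction producing, for each $m \geq 1$ and each finite binary string $s \in \{0,1\}^{<\omega}$, a non-empty open set $V^{(m)}_s$, subject to the conditions:
(i) $V^{(m)}_\emptyset \subset B_m$;
(ii) $\overline{V^{(m)}_{s0}}$ and $\overline{V^{(m)}_{s1}}$ are disjoint subsets of $V^{(m)}_s$;
(iii) $\mathrm{diam}(V^{(m)}_s) \to 0$ as $|s| \to \infty$, uniformly in $m$;
(iv) for any two distinct ``cells'' $(m, s) \neq (m', s')$ already constructed, $\overline{V^{(m)}_s} \times \overline{V^{(m')}_{s'}} \subset U_k$ for every $U_k$ considered so far.
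A zig-zag enumeration over triples (tree index, level, $U_k$-requirement) handles everything in the limit. The two elementary facts that make the inductive step possible are: since $X$ is perfect, every non-empty open set is infinite and can be split into two disjoint non-empty open subsets of small diameter; and since each $U_k$ is open and dense in $X \times X$, every non-empty open rectangle meets $U_k$ in a non-empty open rectangle. At each stage only finitely many pairs of cells and finitely many $U_k$'s are active, so the required shrinkings can be performed one after another.

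For each $m$ I would set $K_m := \bigcap_n \bigcup_{s \in \{0,1\}^n} \overline{V^{(m)}_s}$; conditions (ii) and (iii) make $K_m$ a Cantor set contained in $B_m$, and $K := \bigcup_m K_m$ is then a dense Mycielski subset of $X$. Given two distinct points $x, y \in K$ (possibly in the same or different $K_m$'s), at every sufficiently large level they sit in two different cells of the scheme, and condition (iv) forces $(x, y) \in U_k$ for every $k$, hence $(x,y) \in R$. The main obstacle is purely organizational: the zig-zag bookkeeping must be arranged so that only finitely many conditions are active at any given stage and so that earlier conditions are preserved when we pass to finer cells (the latter is automatic once we only shrink inside previously chosen cells). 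Once the enumeration is correctly designed, each inductive step reduces to a finite sequence of elementary perturbations justified by openness and density of the $U_k$'s together with perfectness of $X$.
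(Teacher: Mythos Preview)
The paper does not give its own proof of this statement: it is quoted as a known result, with a reference to Mycielski's original paper~\cite{My}. Your proposal sketches the classical Cantor-scheme argument, which is essentially Mycielski's approach and is correct in outline. One small phrasing to tighten: when you say that a non-empty open rectangle ``meets $U_k$ in a non-empty open rectangle,'' what is literally true is that the intersection is a non-empty open subset of $X\times X$, which then \emph{contains} a non-empty open rectangle since rectangles form a base; this is surely what you meant, and it is exactly what the inductive step needs. Beyond that, the only remaining work is the bookkeeping you already flag---a diagonal enumeration over (tree index $m$, level $n$, requirement $U_k$) so that every pair of distinct cells eventually gets processed against every $U_k$, with all earlier inclusions preserved by only ever shrinking inside previously chosen cells.
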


Let $G$ be a countable, discrete, infinite group.
A sequence $\{F_n\}_{n=1}^{+\infty}$ of non-empty finite subsets  of $G$ is called a \emph{F{\o}lner sequence}
if for every $g\in G$,
\[\lim_{n\to +\infty} \frac{|gF_n\Delta F_n|}{|F_n|}=0,\]
where $|\cdot|$ denotes the cardinality of a set.
It is well known that
$G$ is amenable if and only if it admits a  F{\o}lner sequence.

\subsection{Invariant measures and the disintegration of measures}
Let $(X,G)$ be a $G$-system.
Denote by $\mathcal{B}$ the collection of all Borel subset of $ X$ and
$\mathcal{M}(X)$ the set of all Borel probability measures on $X$.
The \emph{support} of $\mu\in\mathcal{M}(X)$, denoted by $\supp(\mu)$,
is defined to be the set of all points $x\in X$ for which every open
neighborhood of $x$ has a positive measure.

A measure $\mu\in\mathcal{M}(X)$ is called \emph{$G$-invariant} if $\mu(g^{-1}A)=\mu(A)$
for all $g\in G$ and $A\in \mathcal{B}$,
and it is called \emph{ergodic} if it is $G$-invariant and
$\mu(\bigcup_{g\in G}gA)=1$ for any $A\in \mathcal{B}$ with $\mu(A)>0$.
Denote by $\mathcal{M}(X,G)$ (resp.\ $\mathcal{M}^{e}(X,G)$) the set of all $G$-invariant measures
(resp.\ ergodic measures) of $(X,G)$.

Let $\pi\colon (X,\mathcal{B},\mu)\to (Y,\mathcal{D},\nu)$ be a measure-preserving map between Borel probability spaces.
There exists a map
\[\mathbb{E}_{\mu}(\,\cdot\,|Y)\colon
L^1(X,\mathcal{B},\mu)\to L^1(Y,\mathcal{D},\nu),\]
called the \emph{conditional expectation}, such that
every $f\in L^1(X,\mathcal{B},\mu)$, $\mathbb{E}_\mu(f|Y)$
is the unique
element in $L^1(Y,\mathcal{D},\nu)$ satisfying for every $A\in\mathcal{D}$,
\begin{equation}
\int_A \mathbb{E}_\mu(f|Y)d\nu = \int_{\pi^{-1}(A)} f d\mu.
\end{equation}
There also exists a measurable map from $Y$ to $\mathcal{M}(X)$, denoted by $y\mapsto \mu_y$, which satisfies
for every $f\in L^1(X,\mathcal{B},\mu)$, one has $f \in L^1(X,\mathcal{B},\mu_y)$ for $\nu$-a.e. $y\in Y$ and
\begin{equation}
\mathbb{E}_{\mu}(f|Y)(y)=\int_X f\,d\mu_{y}\
\text{for $\nu$-a.e. } y\in Y. \label{eq:meas3}
\end{equation}
We shall write $\mu=\int \mu_yd\nu(y)$ and refer to this as the
\emph{disintegration of $\mu$ with respect to  $(Y,\mathcal{D},\nu)$}.
It is clear that for $\nu$-a.e. $y\in Y$, $\mu_y(\pi^{-1}(y))=1$.

Let $\mathcal{F}$ be a sub-$\sigma$-algebra of $\mathcal{B}$.
There exists a compact metric space $Y$ and a Borel probability measure
$\nu$ on $(Y,\mathcal{D})$, a measure-persevering map
$\pi\colon (X,\mathcal{B},\mu)\to (Y,\mathcal{D},\nu)$ such that
$\pi^{-1}(\mathcal{D})=\mathcal{F} \pmod \mu$.
For every $f\in L^1(X,\mathcal{B},\mu)$, the conditional expectation of $f$ with respect to $\mathcal{F}$,
denoted by $\mathbb{E}_\mu(f|\mathcal{F})$, is the unique element  in $L^1(X,\mathcal{F},\mu)$
such that $\mathbb{E}_\mu(f|\mathcal{F}) \circ \pi=\mathbb{E}_\mu(f|Y)$.

The following lemma is from \cite{R49} (see Lemma 3 in \S4 No. 2).
\begin{lem}\label{lem:Rohlin-measure}
Let $\mu\in \mathcal{M}(X)$ and
$\mu=\int  \mu_y d \nu(y)$ be the disintegration of $\mu$ with respect to  $(Y,\mathcal{D},\nu)$.
Suppose $\mu_y$ is non-atomic for $\nu$-a.e.\ $ y\in Y$.
If $0<r<1$ and $A$ is a measurable subset of $X$ with $\mu_y(A)\leq r$
for $\nu$-a.e.\ $y\in Y$,
then there exists a measurable subset $A'$ of $X$
such that $A \subset A'$ and $\mu_y(A')=r$ for $\nu$-a.e.\ $y\in X$.
\end{lem}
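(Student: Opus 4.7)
The plan is to treat this as a measurable version of the intermediate value theorem: for $\nu$-almost every $y$, choose a ``height'' $t(y)$ so that adjoining to $A$ the set of points lying ``below'' $t(y)$ raises the fiber measure to exactly $r$, and then glue these pointwise choices together into a single Borel set.

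First I would reduce to the case $X=[0,1]$. Because $\mu_y$ is non-atomic for $\nu$-a.e.\ $y$, the compact metric space $X$ cannot be countable, hence it is an uncountable Polish space and therefore admits a Borel isomorphism with $[0,1]$; identifying $X$ with $[0,1]$ along this isomorphism, I set
\[g(y,t)=\mu_y\bigl([0,t]\setminus A\bigr),\qquad h(y)=r-\mu_y(A).\]
Then (i) $y\mapsto g(y,t)$ is measurable for each fixed $t$ by measurability of the disintegration; (ii) $t\mapsto g(y,t)$ is non-decreasing and right-continuous with $g(y,0)=0$ and $g(y,1)=1-\mu_y(A)\geq 1-r\geq h(y)$; and (iii) by non-atomicity, $t\mapsto g(y,t)$ is in fact continuous for $\nu$-a.e.\ $y$. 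Monotonicity in $t$ combined with measurability in $y$ yields joint measurability of $g$ on $Y\times[0,1]$.

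Next I would apply a measurable intermediate value theorem. Set
\[t(y)=\inf\bigl\{t\in[0,1]\colon g(y,t)\geq h(y)\bigr\}.\]
Right-continuity of $g(y,\cdot)$ in $t$ gives $\{y\colon t(y)\leq s\}=\{y\colon g(y,s)\geq h(y)\}$, so $t$ is measurable, and continuity of $g(y,\cdot)$ for $\nu$-a.e.\ $y$ forces the equality $g(y,t(y))=h(y)$. Finally I define
\[A'=A\cup\bigl\{x\in X\setminus A\colon x\leq t(\pi(x))\bigr\},\]
where $\pi\colon X\to Y$ is the factor map implementing the disintegration. Then $A'\supset A$ is Borel, and because $\mu_y$ is concentrated on $\pi^{-1}(y)$, a direct computation gives $\mu_y(A')=\mu_y(A)+g(y,t(y))=r$ for $\nu$-a.e.\ $y$.

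The main technical obstacle is keeping track of measurability throughout: joint measurability of $g$, measurability of the selection $t(y)$, and Borel-ness of the resulting $A'$. None of these steps is deep, but each demands a short argument. Non-atomicity enters in two essential places: once to justify the reduction to $[0,1]$ (so that $X$ is forced to be uncountable), and once to supply continuity of $g(y,\cdot)$ so that the intermediate value argument produces an exact equality rather than a strict inequality.
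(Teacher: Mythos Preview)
The paper does not supply a proof of this lemma; it merely cites Rohlin~\cite{R49} (Lemma~3 in \S4, No.~2). Your argument is the standard one and is essentially correct: pass via a Borel isomorphism to $[0,1]$, use the monotone family $t\mapsto\mu_y([0,t]\setminus A)$ as a continuous gauge on each fiber, make a measurable selection of the threshold $t(y)$, and assemble the fiberwise sets into a single measurable $A'$ using the map $\pi$.

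One small arithmetic slip: the chain $g(y,1)=1-\mu_y(A)\geq 1-r\geq h(y)$ is wrong in the middle step (take $r=0.9$ and $\mu_y(A)=0$: then $1-r=0.1<0.9=h(y)$). The conclusion $g(y,1)\geq h(y)$ is of course true, but directly: $1-\mu_y(A)\geq r-\mu_y(A)$ because $r<1$. This does not affect the rest of the argument. Your care about joint measurability of $g$ and measurability of the selection $t(\cdot)$ is exactly where the work lies, and your treatment of those points is fine.
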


The \emph{relatively independent self-joining of $\mu$ with respect to  $(Y,\mathcal{D},\nu)$} is the Borel probability measure
$\mu\times_Y\mu=\int \mu_y\times \mu_y d\nu(y)$
on $X\times X$ in the sense that
\[\mu\times_Y\mu(A\times B)=\int_Y\mu_y (A)\mu_y(B)d\nu(y)\]
for all $A,B\in\mathcal{B}$.
Let $p\colon X\times X \to X$ be the canonical projection
to the first coordinate.
Then $\pi\circ p\colon (X\times X,\mathcal{B}\otimes\mathcal{B},\mu\times_Y\mu)\to (Y,\mathcal{D},\nu)$ is measure-preserving and
 the disintegration of $\mu\times_Y \mu$ with respect to $(Y,\mathcal{D},\nu)$ is
\[
(\mu\times_Y\mu)_y=\mu_y\times \mu_y.
\]
We have the following variation of Lemma~\ref{lem:Rohlin-measure}.
\begin{lem}\label{lem:Rohlin-measure-2}
Let $\mu\in \mathcal{M}(X)$ and
and
$\mu=\int  \mu_y d \nu(y)$ be the disintegration of $\mu$ with respect to  $(Y,\mathcal{D},\nu)$.
Suppose $\mu_y$ is non-atomic for $\nu$-a.e.\ $ y\in Y$.
If $0<r<1$ and
$B$ is a measurable subset of $X\times X$ with $\mu_y\times \mu_y(B)\leq r$
for $\nu$-a.e.\ $y\in Y$,
then there exists a measurable subset $B'$ of $X\times X$
such that $B\subset B'$ and $\mu_y\times \mu_y(B')=r$ for $\nu$-a.e.\ $y\in Y$.	
\end{lem}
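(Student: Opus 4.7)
The plan is to reduce Lemma~\ref{lem:Rohlin-measure-2} to the previous Lemma~\ref{lem:Rohlin-measure} applied to the relatively independent self-joining on $X\times X$. As already observed in the paragraph preceding the statement, the projection $\pi\circ p\colon (X\times X,\mathcal{B}\otimes\mathcal{B},\mu\times_Y\mu)\to (Y,\mathcal{D},\nu)$ is measure-preserving, and the disintegration of $\mu\times_Y\mu$ over $(Y,\mathcal{D},\nu)$ is $(\mu\times_Y\mu)_y=\mu_y\times \mu_y$ for $\nu$-a.e.\ $y$. So the pair $(X\times X,\mu\times_Y\mu)$ fits the hypotheses of Lemma~\ref{lem:Rohlin-measure} once the non-atomicity of the fibre measures is checked.

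For that check, I would use the standard fact that a Borel probability measure on a Polish space is non-atomic if and only if every singleton has measure zero. Since $\mu_y$ is non-atomic for $\nu$-a.e.\ $y$, for any $(x_1,x_2)\in X\times X$ one has $(\mu_y\times\mu_y)(\{(x_1,x_2)\})=\mu_y(\{x_1\})\,\mu_y(\{x_2\})=0$, and therefore $\mu_y\times\mu_y$ is non-atomic on $X\times X$ for $\nu$-a.e.\ $y$.

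With that in hand, I simply apply Lemma~\ref{lem:Rohlin-measure} verbatim, with the ambient space $X$ replaced by $X\times X$, the measure $\mu$ replaced by $\mu\times_Y\mu$, the factor map replaced by $\pi\circ p$, and the set $A$ taken to be $B$. The hypothesis $(\mu_y\times\mu_y)(B)\le r$ for $\nu$-a.e.\ $y$ is exactly the one required, and the conclusion produces a measurable set $B'\supset B$ in $X\times X$ with $(\mu_y\times\mu_y)(B')=r$ for $\nu$-a.e.\ $y$, which is what Lemma~\ref{lem:Rohlin-measure-2} asserts.

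There is essentially no substantive obstacle: the only non-bookkeeping step is the non-atomicity of $\mu_y\times\mu_y$, and all the real content has been packaged into Lemma~\ref{lem:Rohlin-measure}. If one wished to be pedantic, the only point worth spelling out is that the standard Borel structure on $X\times X$ is the product structure, so that the disintegration identity $(\mu\times_Y\mu)_y=\mu_y\times\mu_y$ literally furnishes the disintegration required by Lemma~\ref{lem:Rohlin-measure}.
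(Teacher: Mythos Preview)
Your proposal is correct and follows precisely the route the paper intends: the paragraph immediately preceding the lemma sets up $\pi\circ p$ and the disintegration $(\mu\times_Y\mu)_y=\mu_y\times\mu_y$, after which the lemma is stated without proof as a direct ``variation'' of Lemma~\ref{lem:Rohlin-measure}. Your only added ingredient, the observation that $\mu_y\times\mu_y$ is non-atomic whenever $\mu_y$ is, is exactly the missing check, so there is nothing to add.
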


\subsection{Topological entropy and measure-theoretic entropy}
Let $(X,G)$ be a $G$-system.
A \emph{cover} of $X$ is a family of subsets of $X$ whose union is $X$.
Define the \emph{join} of two covers $\mathcal{U}$ and $\mathcal{V}$ of $X$ by
$\mathcal{U}\vee \mathcal{V}=\{U\cap V\colon U\in\mathcal{U}, V\in\mathcal{V}\}$.
Denote by $N(\mathcal{U})$ the number of sets in a subcover of $\mathcal{U}$ of minimal cardinality.

Let $\mathcal{U}$ be an open cover of $X$.
The \emph{entropy of $\mathcal{U}$ with respect to $G$} is defined by
\[h_{top}(G,\mathcal{U})=\lim_{n\to\infty}\frac{1}{|F_n|}
\log N\Bigl(\bigvee_{g\in F_n}g^{-1}\mathcal{U}\Bigr),\]
where $\{F_n\}$ is a F{\o}lner sequence for $G$.
It is well known that the limit exists and is independent of the choice of the F{\o}lner sequence.
The \emph{topological entropy of $(X,G)$} is then defined by
\[h_{top}(X,G)=\sup_{\mathcal{U}}h_{top}(G,\mathcal{U}),\]
where the supremum is taken over all finite open covers of $X$.

A \emph{partition} of $X$ is a family of measurable subsets of $X$
whose elements are pairwise disjoint and the union is $X$.
Given $\mu\in\mathcal{M}(X,G)$,
for a finite partition $\alpha=\{A_1,A_2,\dotsc,A_k\}$ of $X$,
define
\[H_\mu(\alpha)=-\sum_{i=1}^k \mu(A_i)\log\mu(A_i).\]
The \emph{measure-theoretic entropy of $\alpha$ with respect to $\mu$} is defined by
\[h_\mu(G,\alpha)=\lim_{n\to\infty}\frac{1}{|F_n|}
H_\mu\biggl(\bigvee_{g\in F_n}g^{-1}\alpha\biggr),\]
where $\{F_n\}$ is a F{\o}lner sequence for $G$.
It is well known that the limit exists and is independent of the choice of the F{\o}lner sequence.
The \emph{measure-theoretic entropy of $\mu$} is then defined by
\[h_{\mu}(X,G)=\sup_{\alpha}h_{\mu}(G,\alpha),\]
where the supremum is taken over all finite partitions of $X$.
By the well-known variational principle, we have
\[h_{top}(X,G)=\sup_{\mu\in \mathcal{M}(X,G)}h_\mu(X,G)=
\sup_{\mu\in \mathcal{M}^{e}(X,G)}h_\mu(X,G).\]

Let $\mathcal{F}$ be a sub-$\sigma$-algebra of $\mathcal{B}$.
For a finite partition $\alpha$ of $X$,
define
\begin{equation} \label{eq:conditional-entropy}
	H_\mu(\alpha|\mathcal{F})=-
	\int\sum_{A\in\alpha}\mathbb{E}_\mu(1_A|\mathcal{F})\log\mathbb{E}_\mu(1_A|\mathcal{F})d\mu.
\end{equation}
Similarly, we can define the \emph{conditional entropy of a partition $\alpha$ of $X$ given $\mathcal{F}$}
by
\[h_\mu(G,\alpha|\mathcal{F})=\lim_{n\to\infty}\frac{1}{|F_n|}
H_\mu\biggl(\bigvee_{g\in F_n}g^{-1}\alpha|\mathcal{F}\biggr),\]
where $\{F_n\}$ is a F{\o}lner sequence for $G$.

It is well known that if $\alpha$ is a partition of $X$ with $k$-atoms
then $H_\mu(\alpha)\leq \log k$.
We will use the following easy estimation.

\begin{lem}\label{lem:H-mu-alpha2}
Let $\alpha=\{A_1,A_2,\dotsc,A_k\}$ be a partition of $X$.
Then
\[H_\mu(\alpha)\leq \log 2+ (1-\mu(A_1))\log(k-1).\]
\end{lem}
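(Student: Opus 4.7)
The plan is to split $\alpha$ using the coarsening $\beta = \{A_1, X\setminus A_1\}$ and apply the standard chain rule for Shannon entropy, namely $H_\mu(\alpha) = H_\mu(\beta) + H_\mu(\alpha|\beta)$. Because $\beta$ has only two atoms, we automatically get $H_\mu(\beta) \leq \log 2$, which accounts for the first term on the right-hand side.

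For the conditional term, I would compute $H_\mu(\alpha|\beta)$ directly from its definition. Since $\alpha$ restricted to the atom $A_1$ consists of the single set $A_1$, this atom contributes $0$. On the other atom $X\setminus A_1$, the partition $\alpha$ restricts to $\{A_2,\ldots,A_k\}$, which has at most $k-1$ nonempty pieces, so the conditional entropy there is bounded by $\log(k-1)$. Weighting by $\mu(X\setminus A_1) = 1-\mu(A_1)$ yields
\[H_\mu(\alpha|\beta) \leq (1-\mu(A_1))\log(k-1),\]
and combining with the bound on $H_\mu(\beta)$ gives the claim.

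The only substantive step is the chain rule, which is standard; alternatively one can avoid it by writing $H_\mu(\alpha) = -\mu(A_1)\log\mu(A_1) - \sum_{i=2}^k \mu(A_i)\log\mu(A_i)$, factoring $\mu(A_i) = (1-\mu(A_1))\cdot\frac{\mu(A_i)}{1-\mu(A_1)}$ for $i\geq 2$, and using concavity of $-x\log x$ (or equivalently $\sum_{i=2}^k q_i\log(1/q_i) \leq \log(k-1)$ for probabilities $q_i = \mu(A_i)/(1-\mu(A_1))$) together with the two-atom bound $-p\log p - (1-p)\log(1-p) \leq \log 2$ applied to $p = \mu(A_1)$. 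There is no genuine obstacle here; the estimate is a one-line consequence of the chain rule once the correct coarsening is chosen.
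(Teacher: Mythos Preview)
Your proof is correct and follows exactly the same route as the paper: coarsen to $\beta=\{A_1,A_1^c\}$, use the chain rule $H_\mu(\alpha)=H_\mu(\beta)+H_\mu(\alpha\mid\sigma(\beta))$, bound the first term by $\log 2$ and the second by $(1-\mu(A_1))\log(k-1)$. Your write-up is in fact more detailed than the paper's one-line version.
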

\begin{proof}
Let $\beta=\{A_1,A_1^c\}$. Then
\begin{align*}
H_\mu(\alpha)=H_\mu(\alpha\vee\beta)
=H_\mu(\beta)+H_\mu(\alpha|\sigma(\beta))\leq
 \log 2+ (1-\mu(A_1))\log(k-1),
\end{align*}
where $\sigma(\beta)$ is the $\sigma$-algebra generated by $\beta$.
\end{proof}

\subsection{Pinsker \texorpdfstring{$\sigma$}{sigma}-algebra}
Given a $\mu\in \mathcal{M}^{e}(X,G)$,
\emph{the Pinsker $\sigma$-algebra of $(X,\mathcal{B},\mu,G)$}, denoted by $\mathcal{P}_\mu(G)$,
is the $G$-invariant sub-$\sigma$-algebra of $\mathcal{B}$
generated by all finite partitions $\alpha$ of $X$
with $h_\mu(G,\alpha)=0$.
The Pinsker $\sigma$-algebra $\mathcal{P}_\mu(G)$ corresponds
to a \emph{Pinsker factor} $(Y,\mathcal{P}_\mu(G),\nu,G)$ of $(X,\mathcal{B},\mu,G)$, which plays an important role in
the study of entropy in ergodic theory.

The following result is well known,
see e.g.\ \cite[Corollary 18.20]{Gl1} for $\mathbb{Z}$-actions,
\cite[Lemma 4.3]{HXY15} or \cite[Propsition 3.1]{WZ16} for amenable group actions.
\begin{lem} \label{lem:lambda}
Let $(X,G)$ be a $G$-system and $\mu\in \mathcal{M}^e(X,G)$ with $h_\mu(G)>0$.
Let
\[\mu=\int_Y \mu_y d \nu(y)\]
be the disintegration of $\mu$ with respect to the Pinsker factor $(Y,\mathcal{P}_\mu(G),\nu,G)$ and
$\lambda=\mu \times_{Y}\mu$.
Then $\mu_y$ is non-atomic for $\nu$-a.e.\ $ y \in Y$ and
 $\lambda\in \mathcal{M}^e(X\times X,G)$.
In particular, $\lambda(\Delta_X)=0$.
\end{lem}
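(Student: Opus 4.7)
The statement consists of three claims: (i) $\mu_y$ is non-atomic for $\nu$-a.e.\ $y$, (ii) $\lambda$ is ergodic under the diagonal $G$-action on $X\times X$, and (iii) $\lambda(\Delta_X)=0$. I would first observe that (iii) is an immediate consequence of (i): by the disintegration $(\mu\times_Y\mu)_y=\mu_y\times\mu_y$,
\[
\lambda(\Delta_X)=\int_Y (\mu_y\times\mu_y)(\Delta_X)\,d\nu(y)=\int_Y\sum_{x\in X}\mu_y(\{x\})^2\,d\nu(y)=0,
\]
where the final equality uses non-atomicity of each $\mu_y$. So the real work lies in the first two assertions.

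For (i), I would argue by contradiction. Suppose a positive $\nu$-measure set of $y$ carries atomic fibers. The maximum atom weight $c(y):=\sup_{x}\mu_y(\{x\})$ is a Borel function on $Y$ (by a standard measurable selection argument), and the equivariance $g_*\mu_y=\mu_{gy}$ makes it $G$-invariant, so ergodicity of $\nu$ forces $c(y)\equiv c$ for a constant $c>0$. The set $X_c:=\{x\in X:\mu_{\pi(x)}(\{x\})=c\}$ is $G$-invariant in $X$ and has positive $\mu$-measure, hence full $\mu$-measure by ergodicity of $\mu$. Thus for $\nu$-a.e.\ $y$, $\mu_y$ is uniform on at most $N:=\lfloor 1/c\rfloor$ atoms of weight $c$, so $\pi\colon X\to Y$ is a finite-to-one measurable extension, and for any finite partition $\alpha$ of $X$,
\[
H_\mu\Bigl(\bigvee_{g\in F_n}g^{-1}\alpha\,\Big|\,\pi^{-1}\mathcal{D}\Bigr)=\int_Y H_{\mu_y}\Bigl(\bigvee_{g\in F_n}g^{-1}\alpha\Bigr)\,d\nu(y)\leq\log N.
\]
Dividing by $|F_n|$ makes the conditional entropy rate vanish, and the amenable Abramov--Rokhlin formula $h_\mu(G)=h_\nu(G)+h_\mu(G\mid \mathcal{P}_\mu(G))$ yields $h_\mu(G)=h_\nu(G)=0$, contradicting $h_\mu(G)>0$.

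For (ii), I would first note that $\lambda$ is supported on the fiber product $\{(x_1,x_2):\pi(x_1)=\pi(x_2)\}$, so there is a factor map $p_Y\colon (X\times X,\lambda,G)\to (Y,\nu,G)$ via either coordinate. Any $G$-invariant $f\in L^2(\lambda)$ descends under $\mathbb{E}_\lambda(\,\cdot\,\mid p_Y^{-1}\mathcal{D})$ to a $G$-invariant function in $L^2(\nu)$, which is constant by ergodicity of $\nu$; so it suffices to show that $\lambda$ is \emph{relatively ergodic over} $Y$, i.e.\ that $\mathbb{E}_\lambda(f\mid p_Y^{-1}\mathcal{D})=0$ forces $f=0$. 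This is precisely the assertion that the Pinsker factor extension $X\to Y$ is relatively weakly mixing, a classical structural theorem for ergodic systems with positive entropy. I expect this relative weak mixing step to be the main obstacle, as it requires the full Rokhlin--Sinai type characterization of the Pinsker factor in the amenable setting; in the paper the authors sidestep reproving it by citing \cite{Gl1}, \cite{HXY15}, and \cite{WZ16}, and I would follow the same strategy.
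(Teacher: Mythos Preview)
The paper does not prove this lemma at all: it is stated as ``well known'' with citations to \cite[Corollary~18.20]{Gl1} for $\mathbb{Z}$-actions and to \cite[Lemma~4.3]{HXY15} or \cite[Proposition~3.1]{WZ16} for amenable group actions, and no argument is given. Your proposal therefore supplies strictly more than the paper does.

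Your sketch is correct and follows the standard route. The deduction of (iii) from (i) is immediate, as you note. For (i), the contradiction argument via the maximal atom weight $c(y)$, ergodicity to force $c(y)\equiv c>0$, promotion of $X_c$ to full measure, and the resulting finite-fiber bound $H_{\mu_y}(\cdot)\le\log N$ is exactly the classical mechanism; the appeal to the amenable Abramov--Rokhlin formula (together with $h_\nu(G)=0$ on the Pinsker factor) is legitimate and available in the literature you cite. For (ii), your reduction to relative ergodicity of $\lambda$ over $Y$ is clean, and you correctly identify that the substantive input is the relative weak mixing of the Pinsker extension $\pi\colon X\to Y$, which is precisely what the references \cite{Gl1,HXY15,WZ16} establish. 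Since you end up citing the same sources for this step, your approach and the paper's are aligned at the only point that matters; you have simply unpacked the surrounding reductions that the paper leaves implicit.
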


The \emph{Pinsker $\sigma$-algebra of $(X,\mu,G)$ given $\mathcal{F}$},
denoted by $\mathcal{P}_\mu(G|\mathcal{F})$,
is the $G$-invariant sub-$\sigma$-algebra of $\mathcal{B}$
generated by all finite partitions $\alpha$ of $X$
with $h_\mu(G,\alpha|\mathcal{F})=0$.

The following result plays a key role 
in our proof of the main result.
Note that it was proved in \cite[Theorem 2.13]{RW00} under the assumption that the action is free,
and in \cite[Theorem 0.1]{D01} in general.
Here we follow \cite[Theorem 6.10]{HYZ11} for this version.
\begin{thm} \label{thm:entorpy-UPE-extension}
Let $(X,G)$ be a $G$-system.
Assume that $\mu\in \mathcal{M}^e(X,G)$, $\alpha$ is a finite partition of $(X,\mu)$
and $\eps>0$.
Then there exists a finite subset $K$ of $G$ such that for every
finite subset $Q$ of $G$ with
 $(QQ^{-1}\setminus \{e_G\})\cap K=\emptyset$, one has
\begin{equation} \label{eq:partition-Pinsker}
\biggl|H_\mu\bigl(\alpha|\mathcal{P}_\mu(G)\bigr) - \frac{1}{|Q|}H_\mu\biggl(\bigvee_{g\in Q}g^{-1}\alpha\biggl|\mathcal{P}_\mu(G)\biggr)
\biggr|<\eps.
\end{equation}
\end{thm}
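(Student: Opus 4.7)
The plan is to split the desired two-sided bound into a trivial upper estimate and a non-trivial lower estimate. The upper bound
\[\frac{1}{|Q|}H_\mu\Bigl(\bigvee_{g\in Q}g^{-1}\alpha\Bigm|\mathcal{P}_\mu(G)\Bigr)\leq H_\mu(\alpha|\mathcal{P}_\mu(G))\]
is automatic: the $G$-invariance of both $\mu$ and $\mathcal{P}_\mu(G)$ gives $H_\mu(g^{-1}\alpha|\mathcal{P}_\mu(G))=H_\mu(\alpha|\mathcal{P}_\mu(G))$ for every $g\in G$, and then subadditivity of conditional entropy over joins of partitions finishes the job. It therefore suffices to produce a matching lower bound $H_\mu(\alpha|\mathcal{P}_\mu(G))-\eps$ for every admissible $Q$.

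The key technical input is a relative K-mixing property of the Pinsker extension: for any finite partition $\beta$ of $X$ and any $\delta>0$, there exists a finite set $K_0\subset G$ such that
\[H_\mu\Bigl(\beta\Bigm|\bigvee_{g\in F}g^{-1}\beta\vee\mathcal{P}_\mu(G)\Bigr)\geq H_\mu(\beta|\mathcal{P}_\mu(G))-\delta\]
for every finite $F\subset G\setminus K_0$. Equivalently, the conditional mutual information between $\beta$ and any join of its translates that uniformly avoid $K_0$ is less than $\delta$ given $\mathcal{P}_\mu(G)$. This is the characteristic asymptotic conditional independence of a relative K-extension of the Pinsker factor, and is established through the relative Rokhlin--Sinai theory of Rudolph--Weiss \cite{RW00} (for free actions) and Danilenko \cite{D01} (in full generality).

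Granting this lemma, I take $K=K_0$ produced by the lemma applied to $\beta=\alpha$ and $\delta=\eps$, and enumerate any admissible $Q$ as $\{g_1,\dotsc,g_n\}$. The chain rule for conditional entropy gives
\[H_\mu\Bigl(\bigvee_{i=1}^n g_i^{-1}\alpha\Bigm|\mathcal{P}_\mu(G)\Bigr)=\sum_{i=1}^n H_\mu\Bigl(g_i^{-1}\alpha\Bigm|\bigvee_{j<i}g_j^{-1}\alpha\vee\mathcal{P}_\mu(G)\Bigr),\]
and the standard $G$-invariance identity $H_\mu(g^{-1}\eta|\mathcal{G})=H_\mu(\eta|g\mathcal{G})$, combined with $g_i\mathcal{P}_\mu(G)=\mathcal{P}_\mu(G)$, rewrites the $i$-th summand as
\[H_\mu\Bigl(\alpha\Bigm|\bigvee_{j<i}(g_jg_i^{-1})^{-1}\alpha\vee\mathcal{P}_\mu(G)\Bigr).\]
Every element $g_jg_i^{-1}$ with $j<i$ lies in $QQ^{-1}\setminus\{e_G\}$ and hence avoids $K$ by hypothesis, so the relative K-mixing lemma bounds each summand from below by $H_\mu(\alpha|\mathcal{P}_\mu(G))-\eps$. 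Summing over $i=1,\dotsc,n$ and dividing by $|Q|=n$ yields the required lower bound.

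The principal obstacle is the relative K-mixing lemma itself. In the $\mathbb{Z}$-case it follows from letting $N\to\infty$ in the one-sided past $\bigvee_{n\geq N}T^{-n}\alpha$, but for a general countable amenable group no linear ordering on $G$ is available. One instead invokes the Ornstein--Weiss quasi-tiling (Rokhlin) lemma to decompose Følner sets into disjoint translates of a ``good'' base set, combined with the defining property that $\mathcal{P}_\mu(G)$ is generated by all zero-entropy finite partitions. Once that step is in place, the chain-rule iteration described above is purely formal.
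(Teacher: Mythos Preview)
The paper does not actually prove this theorem: it is quoted as a known result, with attribution to \cite[Theorem~2.13]{RW00} for free actions, \cite[Theorem~0.1]{D01} in general, and \cite[Theorem~6.10]{HYZ11} for the precise formulation used here. So there is no in-paper proof to compare your attempt against.

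That said, your outline is an accurate sketch of how the result is obtained in those references. The upper bound is indeed automatic from subadditivity and $G$-invariance of $\mathcal{P}_\mu(G)$, and the lower bound does reduce, via the chain rule and the substitution $g_i^{-1}\alpha\mapsto\alpha$, to the relative K-mixing lemma you state. You correctly identify that lemma as the substantive content and correctly attribute it to the Rudolph--Weiss/Danilenko relative Rokhlin--Sinai theory. One small point worth making explicit in your write-up: you apply the lemma once for each $i$ with $F_i=\{g_jg_i^{-1}:j<i\}$, and these sets can be arbitrarily large, so you are genuinely using the version of the lemma that is uniform over \emph{all} finite $F\subset G\setminus K_0$, not just singletons; this uniformity is exactly what the relative K-property delivers, but it is the nontrivial step and should be flagged as such rather than folded into ``asymptotic conditional independence.''
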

It is clear that
\[
\frac{1}{|Q|}H_\mu\biggl(\bigvee_{g\in Q}g^{-1}\alpha\biggl|\mathcal{P}_\mu(G)\biggr)
\leq \frac{1}{|Q|}\sum_{g\in Q} H_\mu\bigl(g^{-1}\alpha\bigl|\mathcal{P}_\mu(G)\bigr) = H_\mu\bigl(\alpha|\mathcal{P}_\mu(G)\bigr).
\]
So the term inside the absolute value in the formula \eqref{eq:partition-Pinsker} is non-negative.

Let
\[\mu=\int_Y \mu_y d \nu(y)\]
be the disintegration of $\mu$ with respect to the Pinsker factor $(Y,\mathcal{P}_\mu(G),\nu,G)$.
According to \eqref{eq:meas3} and \eqref{eq:conditional-entropy}, one has
\[
H_\mu\bigl(\beta|\mathcal{P}_\mu(G)\bigr) =
\int_Y H_{\mu_y}(\beta) d\nu(y)
\]
for a finite partition $\beta$ of $X$. Note that for any $y\in Y$,
\[
H_{\mu_y}
\biggl(\bigvee_{g\in Q} g^{-1} \alpha \biggr) \leq\sum_{g\in Q} H_{\mu_y}\bigl(g^{-1} \alpha\bigr),
\]
and for any $g\in G$,
\[
\int_Y H_{\mu_y}(g^{-1}\alpha)d\nu(y)  = H_\mu\bigl(g^{-1}\alpha|\mathcal{P}_\mu(G)\bigr)=H_\mu\bigl(\alpha|\mathcal{P}_\mu(G)\bigr)=
\int_Y H_{\mu_y}(\alpha)d\nu(y).
\]
Thus
\begin{align*}\label{eq:partition-Pinsker2}
 &\hskip0.5cm \int_Y\biggl|\frac{1}{|Q|} \sum_{g\in Q}H_{\mu_y}\bigl(g^{-1} \alpha\bigr)-\frac{1}{|Q|}H_{\mu_y}
 \biggl(\bigvee_{g\in Q} g^{-1}\alpha\biggr)
 \biggr| d\nu(y)\\
 &=\int_Y \big( \frac{1}{|Q|} \sum_{g\in Q}H_{\mu_y}\bigl(g^{-1} \alpha\bigr)-\frac{1}{|Q|}H_{\mu_y}
 \biggl(\bigvee_{g\in Q} g^{-1}\alpha\biggr)
 \big) d\nu(y)\\
 &=\frac{1}{|Q|} \sum_{g\in Q}\int_Y H_{\mu_y}\bigl(g^{-1} \alpha\bigr)d\nu(y)-\frac{1}{|Q|}\int_YH_{\mu_y}
 \biggl(\bigvee_{g\in Q} g^{-1}\alpha
 \biggr) d\nu(y)\\
 &=\int_Y H_{\mu_y}\bigl(\alpha\bigr)d\nu(y)-\frac{1}{|Q|}\int_YH_{\mu_y}
 \biggl(\bigvee_{g\in Q} g^{-1}\alpha
 \biggr) d\nu(y)\\
 &=H_\mu\bigl(\alpha|\mathcal{P}_\mu(G)\bigr) - \frac{1}{|Q|}H_\mu\biggl(\bigvee_{g\in Q}g^{-1}\alpha\biggl|\mathcal{P}_\mu(G)\biggr)\\
 &=\biggl|H_\mu\bigl(\alpha|\mathcal{P}_\mu(G)\bigr) - \frac{1}{|Q|}H_\mu\biggl(\bigvee_{g\in Q}g^{-1}\alpha\biggl|\mathcal{P}_\mu(G)\biggr)
\biggr|.
\end{align*}
Hence we can rewrite the formula \eqref{eq:partition-Pinsker}  as
\begin{equation}\label{eq:partition-Pinsker2}
 \int_Y\biggl|\frac{1}{|Q|} \sum_{g\in Q}H_{\mu_y}\bigl(g^{-1} \alpha\bigr)-\frac{1}{|Q|}H_{\mu_y}
 \biggl(\bigvee_{g\in Q} g^{-1}\alpha\biggr)
 \biggr| d\nu(y)<\eps.
\end{equation}
It should be noticed that the term inside the absolute value in the formula \eqref{eq:partition-Pinsker2} is non-negative,
but this form is convenient for later use.

We will use the following result on the Pinsker $\sigma$-algebra of
$(X\times X,\lambda,G)$.
It  was proved in \cite[Theorem 4]{GTW00} under the assumption that the action is free,
and  in \cite[Theorem 0.4]{D01} in general.
Here we follow \cite[Lemma 4.2]{HXY15} for this version.

\begin{lem}  \label{lem:relative-UPE}
Let $(X,G)$ be a $G$-system and $\mu\in \mathcal{M}^e(X,G)$.
Let
\[\mu=\int_Y \mu_y d \nu(y)\]
be the disintegration of $\mu$ with respect to the Pinsker factor $(Y,\mathcal{P}_\mu(G),\nu,G)$,
$\lambda=\mu \times_{Y}\mu$
and $p\colon X\times X \to X$ is the canonical projection to the first coordinate.
Then
\[\mathcal{P}_\lambda(G|p^{-1}(\mathcal{P}_\mu(G)))=p^{-1}(\mathcal{P}_\mu(G)) \pmod \lambda.\]
\end{lem}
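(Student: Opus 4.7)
The inclusion $p^{-1}(\mathcal{P}_\mu(G))\subseteq\mathcal{P}_\lambda(G\,|\,p^{-1}(\mathcal{P}_\mu(G)))$ is immediate from the definition of the relative Pinsker $\sigma$-algebra. Moreover, since $\mathcal{F}:=p^{-1}(\mathcal{P}_\mu(G))$ has zero entropy rate (being the pullback of the zero-entropy Pinsker factor), the relative and absolute Pinsker $\sigma$-algebras of $\lambda$ coincide: $\mathcal{P}_\lambda(G\,|\,\mathcal{F})=\mathcal{P}_\lambda(G) \pmod \lambda$, by the identity $h_\lambda(G,\xi\,|\,\mathcal{F})=h_\lambda(G,\xi)$ for every finite partition $\xi$ (using the Abramov--Rokhlin type equality $h_\lambda(G,\xi\,|\,\mathcal{F})=h_\lambda(G,\xi\vee\mathcal{F})-h_\lambda(G,\mathcal{F})$). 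Thus the lemma is equivalent to $\mathcal{P}_\lambda(G)=\mathcal{F} \pmod \lambda$, i.e.\ the Pinsker factor of $(X\times X,\lambda,G)$ is precisely the Pinsker factor $Y$ of $(X,\mu,G)$.

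Let $q\colon X\times X\to X$ denote the second projection. Because $\mu_y$ is concentrated on the fibre of the Pinsker factor map over $y$ for $\nu$-a.e.\ $y$, the measure $\lambda_y=\mu_y\times\mu_y$ lives on the corresponding fibre in $X\times X$, and hence $\mathcal{F}=q^{-1}(\mathcal{P}_\mu(G)) \pmod \lambda$. The main calculation is the identity
\[h_\lambda(G,\,p^{-1}(\alpha)\vee q^{-1}(\gamma))=h_\mu(G,\alpha)+h_\mu(G,\gamma)\]
for every pair of finite partitions $\alpha,\gamma$ of $X$. Under $\lambda_y=\mu_y\times\mu_y$ the pullbacks $p^{-1}(\eta)$ and $q^{-1}(\theta)$ are independent, so $H_{\lambda_y}(p^{-1}(\eta)\vee q^{-1}(\theta))=H_{\mu_y}(\eta)+H_{\mu_y}(\theta)$; specializing to $\eta=\bigvee_{g\in F_n}g^{-1}\alpha$, $\theta=\bigvee_{g\in F_n}g^{-1}\gamma$, integrating via~\eqref{eq:meas3}, dividing by $|F_n|$ and passing to the F{\o}lner limit yields $h_\mu(G,\alpha\,|\,\mathcal{P}_\mu(G))+h_\mu(G,\gamma\,|\,\mathcal{P}_\mu(G))$, which equals $h_\mu(G,\alpha)+h_\mu(G,\gamma)$ by the standard fact that conditioning on the zero-entropy $\sigma$-algebra $\mathcal{P}_\mu(G)$ preserves entropy rates of finite partitions.

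It remains to show: if $\beta$ is a finite partition of $X\times X$ with $h_\lambda(G,\beta)=0$, then $\beta\subseteq\mathcal{F} \pmod \lambda$. Subadditivity combined with the main identity gives, for every pair $\alpha,\gamma$ of finite partitions of $X$,
\[h_\lambda(G,\,\beta\vee p^{-1}(\alpha)\vee q^{-1}(\gamma))=h_\mu(G,\alpha)+h_\mu(G,\gamma),\]
so $\beta$ contributes no asymptotic entropy beyond any rectangle refinement; since the rectangle partitions $p^{-1}(\alpha)\vee q^{-1}(\gamma)$ generate $\mathcal{B}_{X\times X} \pmod \lambda$ as $\alpha,\gamma$ refine, a density argument in the measure algebra should then force $\beta\subseteq\mathcal{F} \pmod \lambda$. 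The principal technical obstacle is precisely this last density step, because zero entropy rate is fragile under $L^1$-approximation of partitions. To overcome it I would argue by contradiction using Theorem~\ref{thm:entorpy-UPE-extension} in the integrated form~\eqref{eq:partition-Pinsker2}: were $\beta$ not $\mathcal{F}$-measurable, one could extract along a sufficiently sparse finite $Q\subseteq G$ a strictly positive joint contribution of $\beta$ and a well-chosen rectangle refinement to $\frac{1}{|Q|}H_\lambda\bigl(\bigvee_{g\in Q}g^{-1}(\beta\vee p^{-1}(\alpha)\vee q^{-1}(\gamma))\bigr)$ beyond the additive $h_\mu(G,\alpha)+h_\mu(G,\gamma)$ bound, contradicting the displayed equality.
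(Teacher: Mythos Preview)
The paper does not prove this lemma; it quotes it from the literature (Glasner--Thouvenot--Weiss for free actions, Danilenko in general, restated in Huang--Xu--Yi). So there is no ``paper's own proof'' to compare against. Your reduction to $\mathcal{P}_\lambda(G)=\mathcal{F}$ and the additivity identity $h_\lambda(G,p^{-1}\alpha\vee q^{-1}\gamma)=h_\mu(G,\alpha)+h_\mu(G,\gamma)$ are both correct, but the last step has a genuine gap, not merely a technical one.

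The implication ``$h_\lambda(G,\beta\vee p^{-1}\alpha\vee q^{-1}\gamma)=h_\lambda(G,p^{-1}\alpha\vee q^{-1}\gamma)$ for all $\alpha,\gamma$'' $\Rightarrow$ ``$\beta\subseteq\mathcal{F}$'' simply does not follow. The left-hand equality is equivalent to $\beta\subseteq\mathcal{P}_\lambda(G)$, which you already assumed; it carries no information whatsoever about $\mathcal{F}$-measurability. The fact that rectangle partitions generate $\mathcal{B}_{X\times X}$ is irrelevant here: you are not trying to approximate $\beta$ inside $\mathcal{B}_{X\times X}$ (where it obviously lives) but to locate it inside the much smaller $\sigma$-algebra $\mathcal{F}$. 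No density argument can bridge that, because the hypothesis you have established is exactly the tautology ``$\beta$ has zero entropy.''

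Your proposed rescue via Theorem~\ref{thm:entorpy-UPE-extension} does not work either. Applied to $(X\times X,\lambda,G)$, that theorem yields $\frac{1}{|Q|}H_\lambda\bigl(\bigvee_{g\in Q}g^{-1}\xi\,\big|\,\mathcal{P}_\lambda(G)\bigr)\approx H_\lambda(\xi\,|\,\mathcal{P}_\lambda(G))$ for sparse $Q$; to get the integrated form~\eqref{eq:partition-Pinsker2} with the concrete fibre measures $\mu_y\times\mu_y$ you must already know that the disintegration of $\lambda$ over its Pinsker factor is $\int\mu_y\times\mu_y\,d\nu(y)$, i.e.\ that $\mathcal{P}_\lambda(G)=\mathcal{F}$, which is the statement under proof. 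And even staying with the abstract $\mathcal{P}_\lambda(G)$, a ``strictly positive contribution'' of $\beta$ to $\frac{1}{|Q|}H_\lambda(\cdots)$ for sparse $Q$ does not contradict the entropy-rate identity, since those averages approximate $H_\lambda(\cdot\,|\,\mathcal{P}_\lambda(G))$, not $h_\lambda(G,\cdot)$. The substantive content of the lemma is precisely that the extension $(X\times X,\lambda,G)\to(Y,\nu,G)$ is relatively CPE, and the known proofs use genuinely different machinery (joinings and entropy theory without a past in \cite{GTW00}, orbital methods in \cite{D01}); your outline does not supply a substitute for that.
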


By Lemma~\ref{lem:relative-UPE},
we know that $(X\times X,\lambda,G)$ is  $\mathcal{P}_\mu(G)$-relatively
complete positive entropy (see \cite{GTW00} or \cite{D01})
and the Pinsker $\sigma$-algebra $\mathcal{P}_\lambda(G)$ of $(X\times X,\lambda,G)$
is $p^{-1}(\mathcal{P}_\mu(G)) \pmod \lambda$.
Then
\[\lambda=\int_Y\mu_y\times \mu_y d\nu(y)\]
is the disintegration of $\lambda$ with respect to the Pinsker factor of $(X\times X,\lambda,G)$ and for any finite partition $\beta$ of $X\times X$, we have
\begin{align*}
 H_\lambda(\beta|\mathcal{P}_\lambda(G))
  &=\int_Y H_{\mu_y\times\mu_y}(\beta)d\nu(y).
\end{align*}

\section{Proof of the main result}
In fact, we will prove the following result, which is stronger 
than Theorem~\ref{thm:main-result-1} stated in the introduction.
\begin{thm}\label{thm:Main-result-3}
Assume that $(X,G)$ is a $G$-system, $\mu \in \mathcal{M}^{e}(X,G)$,
$\pi$ is a factor map from $(X,\mathcal{B},\mu,G)$ to its Pinsker factor $(Y,\mathcal{P}_\mu(G),\nu,G)$ and $\mu=\int \mu_y d \nu(y)$
is the disintegration of $\mu$ with respect to $(Y,\mathcal{P}_\mu(G),\nu,G)$.
If $h_\mu(G)>0$, then
for any sequence $\{s_i\}_{i=1}^{+\infty}$ with pairwise distinct elements in $G$, one has the following: for $\nu$-a.e.\ $y\in Y$, there exists a constant
$\delta=\delta(y)>0$ and a dense Mycielski subset $K$ of  $\supp(\mu_y)$
such that $K$ is $(\{s_i\},\delta)$-scrambled, that is,
 for any two distinct points $x_1,x_2\in K$, we have
\begin{equation}
\limsup_{i\to+\infty}\rho(s_i x_1,s_ix_2)\geq\delta, \label{eq:R-delta}
\end{equation}
and
\begin{equation}
\liminf_{i\to+\infty}\rho(s_ix_1,s_ix_2)=0.  \label{eq:P}
\end{equation}
\end{thm}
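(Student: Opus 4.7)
The plan is to reduce Theorem~\ref{thm:Main-result-3} to a density statement via Mycielski's theorem, and then establish the density using the relative-entropy machinery assembled in Section~2. By Lemma~\ref{lem:lambda}, for $\nu$-a.e.\ $y\in Y$ the fiber $\mu_y$ is non-atomic, so $\supp(\mu_y)$ is a perfect compact metric space; moreover $\lambda=\mu\times_Y\mu$ is ergodic with $\lambda(\Delta_X)=0$. Introduce, for $m,N\in\N$ and $\delta>0$, the open sets
\[
P_m=\{(x_1,x_2)\in X^2:\exists\,i,\ \rho(s_ix_1,s_ix_2)<1/m\},
\]
\[
R_{\delta,N}=\{(x_1,x_2)\in X^2:\exists\,i\geq N,\ \rho(s_ix_1,s_ix_2)>\delta\}.
\]
If, for $\nu$-a.e.\ $y$, one can produce $\delta(y)>0$ with $\mu_y\times\mu_y(P_m)=1$ and $\mu_y\times\mu_y(R_{\delta(y),N})=1$ for all $m,N$, then $R:=\bigcap_mP_m\cap\bigcap_N R_{\delta(y),N}$ is $G_\delta$ of full $\mu_y\times\mu_y$-measure, hence dense in $\supp(\mu_y)^2$, and Theorem~\ref{thm:Myc} applied to the perfect space $\supp(\mu_y)$ yields the required dense Mycielski $(\{s_i\},\delta(y))$-scrambled subset.

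The proximity claim is the heart of the argument. Fix $m$ and pick a finite Borel partition $\alpha=\{A_1,\dots,A_k\}$ of $X$ with $\mathrm{diam}(\alpha)<1/m$ and $H_\mu(\alpha\mid\mathcal{P}_\mu(G))>0$, available because $h_\mu(G)>0$. For any $\eps>0$, Theorem~\ref{thm:entorpy-UPE-extension} in the form~\eqref{eq:partition-Pinsker2} gives a finite $K_\eps\subset G$ such that every spread-out $Q$, i.e.\ $(QQ^{-1}\setminus\{e_G\})\cap K_\eps=\emptyset$, satisfies
\[
\int_Y\Bigl(\tfrac{1}{|Q|}\sum_{s\in Q}H_{\mu_{sy}}(\alpha)-\tfrac{1}{|Q|}H_{\mu_y}\bigl({\textstyle\bigvee_{s\in Q}}s^{-1}\alpha\bigr)\Bigr)\,d\nu(y)<\eps.
\]
Since $\{s_i\}$ has infinitely many distinct elements, arbitrarily large spread-out $Q\subset\{s_i\}$ can be extracted. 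Under $\mu_y\times\mu_y$ the itinerary pair $(\vec j(x_1),\vec j(x_2))\in[k]^Q\times[k]^Q$, where $j_s(x)$ indexes the atom of $\alpha$ containing $sx$, has independent factors, and its Kullback--Leibler divergence from the product of its marginals equals twice the integrand above evaluated at $y$; by Markov's inequality this divergence is at most $2\sqrt{\eps}\,|Q|$ on a set of $\nu$-measure $\geq1-\sqrt{\eps}$. The bad event $E_{\mathrm{bad}}=\bigcap_{s\in Q}s^{-1}D_\alpha^c$, with $D_\alpha=\bigcup_jA_j\times A_j$, has product-of-marginals mass $\prod_s(1-p_{sy})$ where $p_{sy}=\sum_j\mu_{sy}(A_j)^2\geq 1/k$ by Cauchy--Schwarz, hence at most $((k-1)/k)^{|Q|}$. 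The Bernoulli reduction $\mathrm{KL}(P\|Q)\geq\mathrm{KL}_{\mathrm{Bern}}(P(E)\|Q(E))$ then forces $\mu_y\times\mu_y(E_{\mathrm{bad}})\leq 2k\sqrt{\eps}+O(1/|Q|)$. A diagonal choice $\eps_n\to 0$, applied to each tail $\{s_i\}_{i\geq N}$, produces a $\nu$-conull set on which $\mu_y\times\mu_y(\bigcap_{i\geq N}s_i^{-1}D_\alpha^c)=0$ for every $N$; since $D_\alpha\subset\{\rho<1/m\}$, this gives $\mu_y\times\mu_y(P_m)=1$ along with the $\liminf$ clause~\eqref{eq:P}.

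The separation claim is handled by the same template transferred to the product system $(X\times X,\lambda,G)$. By Lemma~\ref{lem:relative-UPE}, $\mathcal{P}_\lambda(G)=p^{-1}\mathcal{P}_\mu(G)$, so $\lambda=\int\mu_y\times\mu_y\,d\nu$ is already the disintegration over the Pinsker factor of $\lambda$, and Theorem~\ref{thm:entorpy-UPE-extension} applies to partitions of $X\times X$. Choose $\delta_0>0$ with $\lambda(B_{\delta_0})>0$, possible because $\lambda(\Delta_X)=0$, where $B_\delta=\{\rho>\delta\}$; then $\beta=\{B_{\delta_0},B_{\delta_0}^c\}$ has positive conditional entropy over $\mathcal{P}_\lambda(G)$, and the near-independence argument combined with an analogous Fano-type estimate on the product-of-marginals mass $\prod_s(1-p'_{sy})$ with $p'_{sy}=\mu_{sy}\times\mu_{sy}(B_{\delta_0})$ yields $\mu_y\times\mu_y(\bigcap_{i\geq N}s_i^{-1}B_{\delta(y)}^c)=0$ for a suitable measurable $\delta(y)>0$.

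The main technical obstacle appears in this separation step: in the proximity argument the Cauchy--Schwarz bound $p_{sy}\geq 1/k$ is automatic and drives the Fano estimate, whereas the binary marginal $p'_{sy}$ can collapse along unfavourable $s$, so $\prod_s(1-p'_{sy})$ is not automatically exponentially small. I plan to resolve this by tailoring $\delta(y)$ measurably to the diameter of $\supp(\mu_y)$ and, if needed, invoking Lemma~\ref{lem:Rohlin-measure-2} to inflate $B_{\delta_0}$ to a measurable set whose fiberwise $\mu_y\times\mu_y$-mass is a prescribed constant $r\in(0,1)$, thereby restoring a uniform lower bound on the marginals while preserving both the Pinsker-conditional entropy and the spread-out extraction inside $\{s_i\}$.
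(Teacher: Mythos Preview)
Your architecture matches the paper's: reduce to Theorem~\ref{thm:Myc} by proving that the proximity and separation $G_\delta$ sets have full $\mu_y\times\mu_y$-measure, with Theorem~\ref{thm:entorpy-UPE-extension} as the engine. Your proximity argument, however, is genuinely different from the paper's Lemma~\ref{lem:proximal-r}, and in fact cleaner. The paper works on $X\times X$ with a partition normalized via Lemma~\ref{lem:Rohlin-measure-2} and must then run a Fatou-type density count over the orbit $\{gy\}$; you instead take a small-diameter partition $\alpha$ of $X$, pass to $\mu_y\times\mu_y$ via the product KL identity, and exploit the Cauchy--Schwarz bound $\sum_j\mu_{sy}(A_j)^2\geq 1/k$, valid for \emph{every} $s$ and $y$, to get a uniform exponential decay of the product-of-marginals mass of the bad event with no normalization and no density count needed. (The hypothesis $H_\mu(\alpha\mid\mathcal P_\mu(G))>0$ is unused in your argument and can be dropped.)

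The separation step has a real gap. If you inflate $B_{\delta_0}=\{\rho>\delta_0\}$ to $C$ with constant fiber mass $r$, then $C\supset B_{\delta_0}$, hence $C^c\subset B_{\delta_0}^c$, and the asymptotic-close event $\bigcap_{s}s^{-1}B_{\delta_0}^c$ \emph{contains} $\bigcap_{s}s^{-1}C^c$ rather than being contained in it; your Fano bound on the latter says nothing about the former. The correct move---which is precisely what the paper does in Lemma~\ref{lem:not-asy}---is to inflate the \emph{close} set $\Delta_r=\{\rho<r\}$ to $A_1$ with $\mu_y\times\mu_y(A_1)=\tfrac12$ on a large set $V_r\subset Y$, so that $\{\text{always $r$-close}\}\subset\bigcap_g g^{-1}A_1$ and the entropy bound (Lemma~\ref{lem:H-mu-alpha2}) bites. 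Even after reversing the inflation direction a second ingredient is needed that your sketch omits: the normalization only holds on $V_r$, so the $g$-marginal is Bernoulli$(\tfrac12)$ only when $gy\in V_r$, and one must show that $|\{g\in Q_m:gy\in V_r\}|/|Q_m|$ has positive $\limsup$ for $\nu$-most $y$; the paper handles this with a short Fatou argument. Your Cauchy--Schwarz trick cannot substitute here, since there is no automatic lower bound on $\mu_{sy}\times\mu_{sy}(B_{\delta_0})$ analogous to $p_{sy}\geq 1/k$.
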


The main idea of the proof of Theorem~\ref{thm:Main-result-3}
is that after constructing two proper partitions of  $X\times X$
we apply Theorem~\ref{thm:entorpy-UPE-extension}
to show that the collections of pairs satisfying
\eqref{eq:R-delta} and \eqref{eq:P} has a full $\mu_y\times \mu_y$-measure
for $y$ in a large measurable subset of $Y$,
see Lemmas \ref{lem:not-asy} and \ref{lem:proximal-r} below. Then we finish
the proof by applying Theorem~\ref{thm:Myc}.

\medskip

To do so, for every $r>0$, put
\[R_r(\{s_i\})=\Bigl\{(x_1,x_2)\in X\times X\colon
\limsup_{i\to+\infty}\rho(s_ix_1,s_ix_2)\geq r\Bigr\}.\]

\begin{lem}\label{lem:not-asy}
For every $\eps>0$ there exists a $r>0$
and a measurable subset $D$ of $Y$ with $\nu(D)>1-4\eps$
such that $\mu_y\times\mu_y(R_r(\{s_i\}))=1$ for all $y\in D$.
\end{lem}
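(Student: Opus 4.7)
The strategy combines a Fatou-type integral estimate with a quantitative application of Theorem~\ref{thm:entorpy-UPE-extension} to the ergodic system $(X\times X,\lambda,G)$, whose Pinsker $\sigma$-algebra is $p^{-1}(\mathcal{P}_\mu(G))$ by Lemma~\ref{lem:relative-UPE}. The Fatou bound controls the $\nu$-average of $\mu_y\times\mu_y(R_r(\{s_i\})^c)$; the entropy step then upgrades this to \emph{exact} vanishing on a set of $\nu$-measure $>1-4\eps$, using the relative complete positive entropy furnished by Lemma~\ref{lem:relative-UPE}.

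First I would observe that
\[
R_r(\{s_i\})^c\subset\bigcup_{N\geq 1}\bigcap_{i\geq N}(s_i\times s_i)^{-1}(\Delta_r^o),
\]
where $\Delta_r^o=\{(x_1,x_2)\in X\times X\colon\rho(x_1,x_2)<r\}$. Since $\mu_y$ is non-atomic for $\nu$-a.e.\ $y$ by Lemma~\ref{lem:lambda}, one has $\lambda(\Delta_X)=0$ and hence $\lambda(\Delta_r^o)\to 0$ as $r\to 0$. Combining the identity $(g\times g)_*(\mu_y\times\mu_y)=\mu_{gy}\times\mu_{gy}$ with Fatou's lemma and the $G$-invariance of $\nu$ gives
\[
\int_Y\mu_y\times\mu_y\bigl(R_r(\{s_i\})^c\bigr)\,d\nu(y)\leq\lambda(\Delta_r^o),
\]
which can be made less than $4\eps^{2}$ by choosing $r>0$ small.

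For the entropy step I would construct a sequence of finite partitions $\beta^{(n)}$ of $X\times X$ whose first atom is $\Delta_r^o$ and whose remaining atoms nearly equipartition $X\times X\setminus\Delta_r^o$ fibrewise (by iterated application of Lemma~\ref{lem:Rohlin-measure-2}), so that $H_{\mu_y\times\mu_y}(\beta^{(n)})/\log|\beta^{(n)}|\to 1$ on a $\nu$-conull subset of $Y$. For each $n$, apply Theorem~\ref{thm:entorpy-UPE-extension} to $(X\times X,\lambda,G)$ with $\beta^{(n)}$ and a small parameter $\eps'_n\searrow 0$, obtaining finite $K_n\subset G$. Since $\{s_i\}$ consists of pairwise distinct elements, a bounded-degree graph-colouring argument (the graph on $\{s_i\}$ with edges $\{s_i,s_j\}$ whenever $s_is_j^{-1}\in K_n\setminus\{e_G\}$ has degree $\leq 2|K_n|$) produces, for every triple $(N,n,M)$ of positive integers, a subset $Q\subset\{s_N,s_{N+1},\dots\}$ of cardinality $M$ with $QQ^{-1}\cap K_n\subseteq\{e_G\}$. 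The near-additivity formula~\eqref{eq:partition-Pinsker2} together with the upper bound from Lemma~\ref{lem:H-mu-alpha2} applied to $\bigvee_{g\in Q}g^{-1}\beta^{(n)}$, taking the ``all-$\Delta_r^o$'' atom $\bigcap_{g\in Q}(g\times g)^{-1}(\Delta_r^o)$ as $A_1$, then forces, on a $\nu$-large subset of $Y$, the $\mu_y\times\mu_y$-measure of this atom to be bounded by a quantity tending to $0$ as $M\to\infty$ and $n\to\infty$. Since $\bigcap_{i\geq N}(s_i\times s_i)^{-1}(\Delta_r^o)$ is contained in this atom whenever $Q\subset\{s_N,s_{N+1},\dots\}$, taking a countable intersection of the resulting ``good'' subsets over $(N,n,M)\in\mathbb{N}^3$ yields a single measurable $D\subset Y$ with $\nu(D)>1-4\eps$ on which $\mu_y\times\mu_y\bigl(\bigcap_{i\geq N}(s_i\times s_i)^{-1}(\Delta_r^o)\bigr)=0$ for every $N\geq 1$. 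Monotone convergence then gives $\mu_y\times\mu_y(R_r(\{s_i\})^c)=0$ on $D$, as required.

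The main obstacle is the construction of the partitions $\beta^{(n)}$ and the coordination of the triple limit ($M\to\infty$, $n\to\infty$, $\eps'_n\to 0$) on a \emph{single} $\nu$-large set $D$; in particular, one must verify that the fibrewise entropies $H_{\mu_y\times\mu_y}(\beta^{(n)})$ can be made close enough to $\log|\beta^{(n)}|$, uniformly on a $\nu$-large subset of $Y$, to drive the bound from Lemma~\ref{lem:H-mu-alpha2} to $0$ in the limit.
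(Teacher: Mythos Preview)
The obstacle you flag at the end is not merely a technicality---it is fatal to the construction as written. If the first atom of $\beta^{(n)}$ is $\Delta_r^o$, then for every $y$ one has $c_y:=\mu_y\times\mu_y(\Delta_r^o)>0$ (in fact $c_y\geq 1/N^2$ uniformly, where $N$ is the cardinality of any fixed cover of $X$ by balls of radius $r/2$), and therefore
\[
H_{\mu_y\times\mu_y}(\beta^{(n)})\leq -c_y\log c_y-(1-c_y)\log\frac{1-c_y}{|\beta^{(n)}|-1},
\]
so $H_{\mu_y\times\mu_y}(\beta^{(n)})/\log|\beta^{(n)}|\to 1-c_y<1$, not $1$. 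Plugging this into your inequality chain, the combination of \eqref{eq:partition-Pinsker2} and Lemma~\ref{lem:H-mu-alpha2} only yields
\[
a:=\mu_y\times\mu_y\Bigl(\bigcap_{g\in Q}(g\times g)^{-1}\Delta_r^o\Bigr)\ \leq\ \frac{1}{|Q|}\sum_{g\in Q}c_{gy}\ +\ o(1),
\]
and the right-hand average has no reason to tend to $0$ (your Fatou estimate controls its $\nu$-integral, not its value on a fixed $\nu$-large set with $Q$ chosen independently of $y$). So the ``quantity tending to $0$'' does not in fact tend to $0$.

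The paper repairs exactly this point by \emph{enlarging} rather than subdividing: using Lemma~\ref{lem:Rohlin-measure-2} once, one replaces $\Delta_r$ (on the fibers over $V_r=\{y:\mu_y\times\mu_y(\Delta_r)<\tfrac12\}$, which has $\nu(V_r)>1-\eps$) by a set $A_1\supset\Delta_r$ with $\mu_y\times\mu_y(A_1)=\tfrac12$ for $\nu$-a.e.\ $y$, and works with the single three-set partition $\alpha=\{A_1,A_2,A_3\}$ (so no $n\to\infty$ is needed). Now $H_{\mu_y\times\mu_y}(g^{-1}\alpha)=\log 2$ exactly whenever $gy\in V_r$, so after restricting to $I_m(y)=\{g\in Q_m:gy\in V_r\}$ the near-additivity \eqref{eq:partition-Pinsker2} and Lemma~\ref{lem:H-mu-alpha2} give
\[
a_m\cdot\frac{|I_m(y)|}{|Q_m|}\leq \frac{1}{|Q_m|}+\frac{\eps}{2^m\log 2},
\]
with $a_m=\mu_y\times\mu_y\bigl(\bigcap_{g\in I_m(y)}g^{-1}A_1\bigr)$. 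A Fatou argument (applied to $1_{V_r}$, not to $\Delta_r^o$) shows $\limsup_m|I_m(y)|/|Q_m|>\tfrac12$ on a set of $\nu$-measure $>1-2\eps$, forcing $\liminf_m a_m=0$ there. Since $Asy_r(\{s_i\},m)\subset\bigcap_{g\in I_m(y)}g^{-1}A_1$ modulo a $\mu_y\times\mu_y$-null set, this gives $\mu_y\times\mu_y(R_r^c)=0$ on a set $D$ with $\nu(D)>1-4\eps$. In short: the missing idea is to make the partition atom containing $\Delta_r$ have \emph{constant} fibre measure, so that the fibre entropy equals the maximal value $\log 2$ exactly and the cancellation in Lemma~\ref{lem:H-mu-alpha2} is lossless.
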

\begin{proof}
For $t>0$, let
\[\Delta_t=\{(x_1,x_2)\in X\times X\colon \rho(x_1,x_2)<t\}.\]
Let $\lambda=\mu\times_{Y}\mu$.
By Lemma~\ref{lem:lambda}, $\lambda(\Delta_X)=0$.
Then $\lim_{t\to 0}\lambda(\Delta_t)=\lambda(\Delta_X)=0$.
Fix $\eps>0$.
There exists a $r>0$ such that
$\lambda(\Delta_r)<\frac{\eps}{2}$.
Note that
\[\lambda(\Delta_r)=\int_Y \mu_y\times \mu_y(\Delta_r)d\nu(y)
\geq \tfrac{1}{2}\mu(\{y\in Y\colon \mu_y\times \mu_y(\Delta_r)\geq \tfrac{1}{2}\}),\]
so $\nu(V_r)>1-\eps$, where
\[V_r=\{y\in Y\colon 0<\mu_y\times \mu_y(\Delta_r)<\tfrac{1}{2}\}.\]
By Lemma~\ref{lem:Rohlin-measure-2},
there exists a measurable set $\Delta_r\cap(\pi^{-1}(V_r)\times \pi^{-1}(V_r))\subset B\subset X\times X$
with $\mu_y\times\mu_y(B)=\frac{1}{2}$
for $\nu$-a.e.\ $y\in Y$.
Let $A_1=B\cap (\pi^{-1}(V_r)\times \pi^{-1}(V_r))$,
$A_2=(\pi^{-1}(V_r)\times \pi^{-1}(V_r))\setminus A_1$, $A_3=X\times X\setminus(\pi^{-1}(V_r)\times \pi^{-1}(V_r))$,
and  $$\alpha=\{A_1,A_2,A_3\}.$$

Let $Q_1=\{s_1\}$ and suppose that for some $m\in\mathbb{N}$,
we have defined the sets $Q_1,Q_2,\dotsc,Q_{m-1}$.
Now we apply Theorem~\ref{thm:entorpy-UPE-extension} to $(X\times X,\lambda,G)$,
the partition $\alpha$ and the constant $\bigl(\frac{\eps}{2^m}\bigr)^2$,
and we let  $K_m$ be the resulting set.
Choose $Q_m\subset\{s_{m},s_{m+1},\dotsc\}\setminus \bigcup_{i<m}Q_i$ such that
$(Q_mQ_m^{-1}\setminus\{e_G\})\cap K_m=\emptyset$ and $|Q_m|>|Q_{m-1}|$.
Then
\[ \int\biggl\vert
\frac{1}{|Q_m|}H_{\mu_y\times\mu_y}\biggl(\bigvee_{g\in Q_m} g^{-1}\alpha\biggr)-
\frac{1}{|Q_m|}\sum_{g\in Q_m} H_{\mu_y\times\mu_y}(g^{-1}\alpha)\biggr\vert d\nu(y)
<\Bigl(\frac{\eps}{2^m}\Bigr)^2.\]
Let
\[
D_m=\biggl\{y\in Y\colon\biggl\vert \frac{1}{|Q_m|}H_{\mu_y\times\mu_y}
\biggl(\bigvee_{g\in Q_m} g^{-1}\alpha\biggr)
-\frac{1}{|Q_m|}\sum_{g\in Q_m} H_{\mu_y\times\mu_y}(g^{-1}\alpha)\biggr\vert<
\frac{\eps}{2^m}\biggr\}.
\]
Then $\nu(D_m)>1- \frac{\eps}{2^m}$.
Let $D_0=\bigcap_{m=1}^{+\infty} D_m$. Then $\mu(D_0)>1-\eps$.

By \cite[Proposition 5.9]{F81} or \cite[Corollary 5.24]{EW}, for any $g\in G$,
$g\mu_y=\mu_{gy}$ holds for $\nu$-a.e.\ $y\in X$.
Then there exists a $G$-invariant measurable subset $Y_0$ of $Y$ with $\nu(Y_0)=1$
such that
\begin{enumerate}
\item for any $y\in Y_0\cap V_r$, $\mu_y\times\mu_y(A_1)=\frac{1}{2}$;
\item for any $y\in Y_0$ and $g\in G$,
\[H_{\mu_y\times\mu_y}(g^{-1}\alpha)= H_{\mu_{gy}\times\mu_{gy}}(\alpha);\]
\item for any $y\in Y_0$, $\mu_y(\pi^{-1}(y))=1$.
\end{enumerate}
For every $y\in Y_0$ and $g\in G$, we have the following two cases:
\begin{enumerate}
\item[Case 1]   if $gy\in V_r$, one has $\mu_{gy}\times\mu_{gy}(A_1)=\frac{1}{2}$,
$\mu_{gy}\times\mu_{gy}(A_2)=\frac{1}{2}$ and $\mu_{gy}\times\mu_{gy}(A_3)=0$,
then $H_{\mu_y\times\mu_y}(g^{-1}\alpha)=\log 2$;
\item[Case 2]  if $gy\not\in V_r$, one has $\mu_{gy}\times\mu_{gy}(A_1)=0$,
$\mu_{gy}\times\mu_{gy}(A_2)=0$ and $\mu_{gy}\times\mu_{gy}(A_3)=1$,
then $H_{\mu_y\times\mu_y}(g^{-1}\alpha)=0$.
\end{enumerate}
For any $y\in Y$, let $I_m(y)=\{g\in Q_m\colon gy\in V_r\}$ for $m\in \mathbb{N}$.
By conclusions of Cases 1 and 2, we have for any $y\in Y_0$ and $m\in \mathbb{N}$
\[\frac{1}{|Q_m|}\sum_{g\in Q_m} H_{\mu_y\times \mu_y}(g^{-1}\alpha)
=\frac{1}{|Q_m|} |I_m(y)| \log 2\]
and
\[
H_{\mu_y\times \mu_y}\biggl(\bigvee_{g\in Q_m}g^{-1}\alpha\biggr)
=H_{\mu_y\times \mu_y}\biggl(\bigvee_{g\in I_m(y)}g^{-1}\alpha \biggr).
\]
By the definition of $D_m$, we have
\begin{equation}
\biggl|\frac{1}{|Q_m|} H_{\mu_y\times\mu_y}
\biggl(\bigvee_{g\in I_m(y)} g^{-1}\alpha\biggr)-
\frac{1}{|Q_m|}|I_m(y)|\log2\biggr|<\frac{\eps}{2^m}, \label{eq:H-mux-1}
\end{equation}
for every $y\in D_0\cap Y_0$ and every $m\geq 1$.
For every $y\in Y$, define
\[f(y)
=\limsup_{m\to+\infty}\frac{1}{|Q_m|}|I_m(y)|
=\limsup_{m\to+\infty} \frac{1}{|Q_m|} \sum_{g\in Q_m} 1_{V_r}(gy).
\]
Then
\begin{align*}
\int_Y  f(y)d\nu(y)&=\int_Y  \limsup_{m\to+\infty}\frac{1}{|Q_m|} \sum_{g\in Q_m} 1_{V_r}(gy) d\nu(y) \\
&\geq \limsup_{m\to+\infty} \frac{1}{|Q_m|} \sum_{g\in Q_m}\int_Y 1_{V_r}(gy) d\nu(y), \quad \text{ by Fatou's Lemma} \\
&=\nu(V_r)>1-\eps,
\end{align*}
and $\nu(\{y\in Y\colon f(y)>\frac{1}{2}\})>1-2\eps$.
Let
\[D=V_r\cap D_0\cap Y_0\cap \{y\in Y\colon f(y)>\tfrac{1}{2}\}.\]
Then $\nu(D)>1-4\eps$.

Now we show that $D$ is as required. Fix $y\in D$ and $m\in\mathbb{N}$.
Let
\[a_m=\mu_y\times \mu_y\biggl(\bigcap_{g\in I_m(y)} g^{-1}A_1\biggr).\]
If $g \in I_m(y)$, i.e., $gy\in V_r$, then by the conclusion of Case 1 we have
\begin{equation}
  \mu_{gy}\times \mu_{gy}(A_3)=0 \label{eq:mu-A3-0}
\end{equation}
 and
\[ \bigvee_{g\in I_m(y)} g^{-1}\alpha =
\bigvee_{g\in I_m(y)} g^{-1}\{A_1,A_2\} \pmod{\mu_y\times\mu_y}.\]
By \eqref{eq:H-mux-1}, we have
\begin{align*}
\frac{1}{|Q_m|}|I_m(y)|\log2-\frac{\eps}{2^m}&\leq
\frac{1}{|Q_m|}H_{\mu_y\times \mu_y} \biggl(\bigvee_{g\in I_m(y)} g^{-1}\alpha\biggr)\\
&=\frac{1}{|Q_m|}H_{\mu_y\times \mu_y}\biggl(\bigvee_{g\in I_m(y)} g^{-1}\{A_1,A_2\}\biggr)\\
&\leq\frac{1}{|Q_m|}
\Bigl(\log 2+(1-a_m)\log\bigl(2^{|I_m(y)|}-1\bigr)\Bigr),
\quad \text{ by Lemma~\ref{lem:H-mu-alpha2}}\\
&\leq \frac{1}{|Q_m|}\Bigl( \log 2+(1-a_m)|I_m(y)|\log2\Bigr).
\end{align*}
Then
\[
a_m \cdot \frac{1}{|Q_m|}|I_m(x)| \leq \frac{1}{|Q_m|}+\frac{\eps}{2^m\log 2}.
\]
As $\limsup\limits_{m\to+\infty}\frac{1}{|Q_m|}|I_m(y)|=f(y)>\frac{1}{2}$,
we have $\liminf\limits_{m\to+\infty}a_m=0$.

Let
\[Asy_r(\{s_i\},m)=\{(x_1,x_2)\in X\times X\colon
\rho(s_i x_1,s_ix_2)< r,\ \forall i\geq m\}.\]
Note that $Asy_r(\{s_i\},m)$ is increasing as $m$ increases.
Let
\[Asy_r(\{s_i\})=\bigcup_{m=1}^{+\infty} Asy_r(\{s_i\},m).\]
Note that if $\rho(x_1,x_2)<r$, then $(x_1,x_2)\in A_1\cup A_3$.
As $Q_m\subset \{s_{m},s_{m+1},\dotsc\}$,
\begin{align*}
 Asy_r(\{s_i\},m) \subset \bigcap_{g\in Q_m}g^{-1}(A_1\cup A_3).
\end{align*}
So for any $y\in D$,
\begin{align*}
\mu_y\times\mu_y(Asy_r(\{s_i\},m))
&\leq \mu_y\times\mu_y
\biggl(\bigcap_{g\in Q_m}g^{-1}(A_1\cup A_3) \biggr)\\
&\leq \mu_y\times\mu_y
\biggl(\bigcap_{g\in I_m(y)}g^{-1}(A_1\cup A_3) \biggr)\\
&=\mu_y\times\mu_y \biggl(\bigcap_{g\in I_m(y)}g^{-1}A_1 \biggr),
\qquad \text{ by } \eqref{eq:mu-A3-0}\\
&=a_m.
\end{align*}
As $\liminf\limits_{m\to+\infty}a_m\to 0$,
we have $\mu_y\times\mu_y(Asy_r(\{s_i\}))=0$,
because $Asy_r(\{s_i\},m)$ is increasing as $m$ increases.
It is clear that
$X\times X\setminus Asy_r(\{s_i\})\subset R_r(\{s_i\})$.
Thus, $\mu_y\times\mu_y(R_r(\{s_i\}))=1$ for any $x\in D$. This ends the proof.
\end{proof}
For $r>0$, put
\[P_r(\{s_i\})=\Bigl\{(x_1,x_2)\in X\times X\colon \liminf_{i\to+\infty}\rho(s_ix_1,s_ix_2)\leq r\Bigr\}.\]

\begin{lem}\label{lem:proximal-r}
For every $r>0$ and $\eps>0$ there exists a measurable subset $E$ of $X$ with $\mu(E)>1-4\eps$ such that
for any $y\in E$, $\mu_y\times \mu_y(P_r(\{s_i\}))=1$.
\end{lem}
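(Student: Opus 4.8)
The plan is to prove, for $y$ in a large subset $E$ of $Y$, that the complement $P_r(\{s_i\})^{c}=\{(x_1,x_2):\liminf_i\rho(s_ix_1,s_ix_2)>r\}$ is $\mu_y\times\mu_y$-null. This is formally dual to Lemma~\ref{lem:not-asy}, but now the region we must control is the \emph{large} one, so the argument has to be organized differently. First I fix a finite Borel partition $\beta=\{P_1,\dots,P_N\}$ of $X$ with $\operatorname{diam}(P_j)<r$ for every $j$, and on $X\times X$ I form the two-set partition $\eta=\{\Delta_\beta,D_\beta\}$, where $\Delta_\beta=\bigcup_j P_j\times P_j$ and $D_\beta=(X\times X)\setminus\Delta_\beta$. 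The choice of $\beta$ serves two purposes: if $(x_1,x_2)\in P_r(\{s_i\})^{c}$ then eventually $\rho(s_ix_1,s_ix_2)>r$, which forces $s_ix_1,s_ix_2$ into distinct pieces, i.e.\ $(x_1,x_2)\in s_i^{-1}D_\beta$; and, by Cauchy--Schwarz, $\mu_{gy}\times\mu_{gy}(\Delta_\beta)=\sum_j\mu_{gy}(P_j)^2\ge 1/N$ for \emph{every} $g$, so $\mu_y\times\mu_y(g^{-1}D_\beta)=\mu_{gy}\times\mu_{gy}(D_\beta)\le 1-1/N=:c<1$ uniformly in $g$ and $y$.

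Next, exactly as in Lemma~\ref{lem:not-asy}, I would apply Theorem~\ref{thm:entorpy-UPE-extension} to $(X\times X,\lambda,G)$ and the partition $\eta$ with a summable sequence of error constants $\eps_m^2$, obtaining finite sets $K_m$ and then choosing $Q_m\subset\{s_m,s_{m+1},\dots\}$ with $(Q_mQ_m^{-1}\setminus\{e_G\})\cap K_m=\emptyset$ and $|Q_m|\to\infty$ but \emph{slowly} (say $|Q_m|=m$); such sparse sets exist inside the tail of $\{s_i\}$ by a greedy choice. This produces a set $D_m$ with $\nu(D_m)>1-\eps_m$ on which the normalized entropy defect is at most $\eps_m$, that is $\sum_{g\in Q_m}H_{\mu_y\times\mu_y}(g^{-1}\eta)-H_{\mu_y\times\mu_y}\bigl(\bigvee_{g\in Q_m}g^{-1}\eta\bigr)<|Q_m|\eps_m$. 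Because $Q_m$ lies in the tail, the observation above gives $P_r(\{s_i\})^{c}\subset\liminf_m\bigcap_{g\in Q_m}g^{-1}D_\beta$, so by Fatou it suffices to prove $\liminf_m w_m(y)=0$, where $w_m(y)=\mu_y\times\mu_y\bigl(\bigcap_{g\in Q_m}g^{-1}D_\beta\bigr)$, for $y$ in $E:=\bigcap_m D_m$ (whose measure can be arranged to exceed $1-4\eps$).

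The crux, and the step I expect to be the main obstacle, is converting the entropy near-independence into a bound on the single cylinder $\bigcap_{g\in Q_m}g^{-1}D_\beta$. Here the mechanism of Lemma~\ref{lem:not-asy} is \emph{not} available: there the target set sat inside a measure-$\tfrac12$ atom, so maximal entropy via Lemma~\ref{lem:H-mu-alpha2} pinned down its measure, whereas now $D_\beta$ is the majority region (its fiberwise measure is $\ge\tfrac12$ on most fibers), no single atom contains the event, and the max-entropy trick fails. Instead I would read the entropy defect as the Kullback--Leibler divergence between the joint law of $\bigl(1_{D_\beta}(g\,\cdot)\bigr)_{g\in Q_m}$ on $\{0,1\}^{Q_m}$ and the product of its marginals, and apply Pinsker's inequality to bound the total-variation distance by $\sqrt{\tfrac12|Q_m|\eps_m}$. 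Since the product measure assigns the ``all-$D_\beta$'' cylinder at most $\prod_{g\in Q_m}\mu_{gy}\times\mu_{gy}(D_\beta)\le c^{|Q_m|}$, this yields $w_m(y)\le c^{|Q_m|}+\sqrt{\tfrac12|Q_m|\eps_m}$; with $|Q_m|\to\infty$ and $|Q_m|\eps_m\to0$ (guaranteed by the slow growth of $|Q_m|$ against the summable $\eps_m$), both terms tend to $0$, so $\liminf_m w_m(y)=0$ and hence $\mu_y\times\mu_y(P_r(\{s_i\}))=1$ on $E$. If one prefers to stay strictly within the tools already set up and avoid Pinsker's inequality, the same conclusion can be reached by splitting $Q_m$ into many mutually near-independent blocks, using Lemma~\ref{lem:H-mu-alpha2} on each block to bound the per-block ``no close return'' probability away from $1$, and then multiplying these estimates across the blocks.
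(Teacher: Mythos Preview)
Your argument is correct and genuinely different from the paper's. The paper does not use a small-diameter partition of $X$; instead it fixes $L$ so that $W_r=\{y:\mu_y\times\mu_y(\Delta_r)\ge 1/L\}$ has $\nu(W_r)>1-\eps$, builds via Lemma~\ref{lem:Rohlin-measure-2} a partition $\beta=\{B_1,\dots,B_L,B_{L+1}\}$ of $X\times X$ with $B_L\subset\Delta_r$ and $\mu_y\times\mu_y(B_i)=1/L$ for $\nu$-a.e.\ $y\in W_r$, and then bounds $b_m=\mu_y\times\mu_y\bigl(\bigcap_{g\in I_m(y)}g^{-1}(B_1\cup\dots\cup B_{L-1})\bigr)$ using only Lemma~\ref{lem:H-mu-alpha2}: the near-independence forces $H_{\mu_y\times\mu_y}\bigl(\bigvee_{g\in I_m(y)}g^{-1}\beta\bigr)$ close to $|I_m(y)|\log L$, while the partition of $C_m=\bigcap g^{-1}(B_1\cup\dots\cup B_{L-1})$ has only $(L-1)^{|I_m(y)|}$ atoms, yielding $b_m\cdot\frac{|I_m(y)|}{|Q_m|}\le\frac{1}{\log L-\log(L-1)}\bigl(\frac{\log 2}{|Q_m|}+\frac{\eps}{2^m}\bigr)$; a Fatou argument then gives $\limsup_m |I_m(y)|/|Q_m|>1/2$ on a large set. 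Your route avoids the restriction to $W_r$ and the visit-frequency bookkeeping entirely, thanks to the uniform Cauchy--Schwarz bound $\mu_{gy}\times\mu_{gy}(\Delta_\beta)\ge 1/N$, and replaces the combinatorial entropy estimate by the identification of the defect with $D_{\mathrm{KL}}(P\Vert Q)$ together with Pinsker's inequality; the price is that you must control $|Q_m|\eps_m\to 0$, which the paper's approach does not need. Two small points: you should intersect $E$ with the conull set $Y_0$ on which $g\mu_y=\mu_{gy}$ (so that $\mu_y\times\mu_y(g^{-1}D_\beta)=\mu_{gy}\times\mu_{gy}(D_\beta)$ is justified), and your suggested fallback of ``splitting $Q_m$ into blocks and using Lemma~\ref{lem:H-mu-alpha2} per block'' is not obviously workable as stated---near-independence over $Q_m$ does not by itself give multiplicativity across blocks---so the clean route within the paper's toolkit really is the $\beta$-partition above rather than a block decomposition of your $\eta$.
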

\begin{proof}
Fix $r>0$ and $\eps>0$.
As $\mu_y\times\mu_y(\Delta_r)>0$ for all $y\in Y$, there exists $L\in\mathbb{N}$ such that
$\nu(W_r)>1-\eps$, where $W_r=\{y\in Y\colon \mu_y\times \mu_y(\Delta_r)\geq \frac{1}{L}\}$.
By Lemma~\ref{lem:Rohlin-measure-2}, there exists
  $B_L\subset \Delta_r\cap (\pi^{-1}(W_r)\times \pi^{-1}(W_r))$
such that $\mu_y\times \mu_y(B_L)=\frac{1}{L}$ for $\nu$-a.e.\ $y\in W_r$.
By Lemma~\ref{lem:Rohlin-measure-2} again,
there exist $B_1,\dotsc,B_{L-1}\subset \pi^{-1}(W_r)\times \pi^{-1}(W_r)$ such that
$\{B_1,\dotsc,B_L\}$ is a partition of $\pi^{-1}(W_r)\times \pi^{-1}(W_r)$
and for  $\nu$-a.e.\
$y\in W_r$, $\mu_y\times \mu_y(B_i)=\frac{1}{L}$ for $i=1,2,\dotsc,L$.
Let $B_{L+1}=X\times X\setminus(\pi^{-1}(W_r)\times \pi^{-1}(W_r))$ and
$$\beta=\{B_1,B_2,\dotsc,B_L, B_{L+1}\}.$$

Let $Q_1=\{s_1\}$ and suppose that for some $m\in\mathbb{N}$,
we have define the sets $Q_1,Q_2,\dotsc,Q_{m-1}$.
Now we apply Theorem~\ref{thm:entorpy-UPE-extension} to
$(X\times X,\lambda,G)$, the partition $\beta$ and the constant $\bigl(\frac{\eps}{2^m}\bigr)^2$, and we let $K_m$ be the resulting set.
Choose $Q_m\subset\{s_{m},s_{m+1},\dotsc\}\setminus \bigcup_{i<m}Q_i$ such that
$(Q_mQ_m^{-1}\setminus\{e_G\})\cap K_m=\emptyset$ and $|Q_m|>|Q_{m-1}|$.
Then
\[ \int \biggl| \frac{1}{|Q_m|}H_{\mu_y\times\mu_y}\biggl(\bigvee_{g\in Q_m} g^{-1}\beta\biggr)-
\frac{1}{|Q_m|}\sum_{g\in Q_m} H_{\mu_y\times\mu_y}(g^{-1}\beta) \biggr| d\nu(y)<
\Bigl(\frac{\eps}{2^m}\Bigr)^2.\]
Let
\[
E_m=\biggl\{y\in Y\colon\biggl\vert \frac{1}{|Q_m|}H_{\mu_y\times\mu_y}
\biggl(\bigvee_{g\in Q_m} g^{-1}\beta\biggr)
-\frac{1}{|Q_m|}\sum_{g\in Q_m} H_{\mu_y\times\mu_y}(g^{-1}\beta)\biggr\vert<
\frac{\eps}{2^m}\biggr\}.
\]
Then $\mu(E_m)>1- \frac{\eps}{2^m}$.
Let $E_0=\bigcap_{m=1}^{+\infty} E_m$. Then $\mu(E_0)>1-\eps$.

There exists a $G$-invariant measurable subset $Y_0$ of $Y$ with $\nu(X_0)=1$
such that
\begin{enumerate}
	\item for any $y\in Y_0\cap W_r$,
	$\mu_y\times \mu_y(B_i)=\frac{1}{L}$ for $i=1,2,\dotsc,L$;
	\item for any $y\in Y_0$ and $g\in G$,
	\[H_{\mu_y\times\mu_y}(g^{-1}\beta)= H_{\mu_{gy}\times\mu_{gy}}(\beta);\]
    \item for any $y\in Y_0$, $\mu_y(\pi^{-1}(y))=1$.
\end{enumerate}
For every $y\in Y_0$ and $g\in G$, we have the following two cases:
\begin{enumerate}
	\item[Case 1]  if $gy\in W_r$, one has $\mu_{gy}\times\mu_{gy}(B_i)=\frac{1}{L}$
	for $i=1,2,\dotsc,L$, and $\mu_{gy}\times\mu_{gy}(B_{L+1})=0$,
	then $H_{\mu_y\times\mu_y}(g^{-1}\beta)=\log L$;
	\item[Case 2] if $gy\not\in W_r$, one has $\mu_{gy}\times\mu_{gy}(B_i)=0$
	for $i=1,2,\dotsc,L$, and
	$\mu_{gy}\times\mu_{gy}(B_{L+1})=1$
	then $H_{\mu_y\times\mu_y}(g^{-1}\beta)=0$.
\end{enumerate}
For $y\in Y$, let $I_m(y)=\{g\in Q_m\colon gy\in W_r\}$ for $m\in\mathbb{N}$.
By conclusions of Cases 1 and 2,
we have for any $y\in Y_0$ and $m\in \mathbb{N}$,
\[\sum_{g\in Q_m}H_{\mu_y\times\mu_y}(g^{-1} \beta)=|I_m(y)|  \log L,\]
and
\[H_{\mu_y\times\mu_y}\biggl(\bigvee_{g\in Q_m} g^{-1}\beta\biggr)=
H_{\mu_y\times \mu_y}\biggl(\bigvee_{g\in I_m(y)} g^{-1}\beta\biggr).\]
By the definition of $E_m$, we have
\begin{equation}
\biggl|\frac{1}{|Q_m|} H_{\mu_y\times\mu_y}
\biggl(\bigvee_{g\in I_m(y)} g^{-1}\beta\biggr)-
\frac{1}{|Q_m|}|I_m(y)|\log L\biggr|<\frac{\eps}{2^m}, \label{eq:H-mux-beta-1}
\end{equation}
for every $y\in E_0\cap Y_0$ and every $m\geq 1$.
Similarly as in the proof in Lemma~\ref{lem:not-asy},
the $\nu$-measure of the set
\[Y_1=\Bigl\{y\in Y\colon \limsup_{n\to+\infty} \frac{1}{|Q_m|}|I_m(y)|>\tfrac{1}{2}\Bigr\}\]
is at lease $1-2\eps$.
Let
\[E=W_r\cap E_0\cap Y_0 \cap Y_1.\]
Then $\nu(E)>1-4\eps$.

Now we show that $E$ is as required.
Fix $y\in E$ and $m\geq 1$.
Let
\[C_m=\bigcap_{g\in I_m(y)} g^{-1}(B_1\cup B_2\cup \dotsb\cup B_{L-1})\]
and $b_m=\mu_y\times\mu_y(C_m)$.
If $g\in I_m(y)$, i.e., $gy\in W_r$, then by the conclusion of Case 1
we have
\begin{equation}
\mu_y\times \mu_y(g^{-1}B_{L+1})=0 \label{eq:mu-x-B-L+1}
\end{equation}
 and
\begin{align*}
\bigvee_{g\in I_m(y)} g^{-1}\beta &=
\bigvee_{g\in I_m(y)} g^{-1}(\{B_1,B_2,\dotsc,B_L\}) \pmod{\mu_y\times\mu_y} \\
&=\bigvee_{g\in I_m(y)} g^{-1}(\{B_1,B_2,\dotsc,B_{L-1}\})   \\ 
&\qquad\cup \bigcup_{g_0\in I_m(y)} \biggl(\bigl\{g_0^{-1} B_L\bigr\}\bigvee
\bigvee_{g\in I_m(y)\setminus \{g_0\}} g^{-1}(\{B_1,B_2,\dotsc,B_{L-1},B_L\})\biggr)\\
&=\ (*)\ \cup\ (**)\
\end{align*}
Note that the part $(*)$ can be regarded as a partition of $C_m$  with at most $(L-1)^{|I_m(y)|}$ atoms
and the part $(**)$ as a partition of $X\times X \setminus C_m$ with less than $L^{|I_m(y)|}$ atoms.
By \eqref{eq:H-mux-beta-1}, we have
\begin{align*}
 \frac{1}{|Q_m|}|I_m(y)|\log L-\frac{\eps}{2^m} &\leq
  \frac{1}{|Q_m|} H_{\mu_y\times \mu_y}\biggl(\bigvee_{g\in I_m(y)} g^{-1}\beta\biggr)\\
  &=\frac{1}{|Q_m|}H_{\mu_y\times \mu_y}\bigl(\ (*)\ \cup\ (**)\ \bigr)  \\
&\leq \frac{1}{|Q_m|}\Bigl(\log 2+ b_m|I_m(y)| \log(L-1) \\
&\qquad\qquad\qquad+(1-b_m)|I_m(y)|\log L\Bigr), \quad
\text{by Lemma~\ref{lem:H-mu-alpha2}}.
\end{align*}
Then
\[ b_m\cdot\frac{1}{|Q_m|}|I_m(y)|\leq \frac{1}{\log L-\log(L-1)}\Bigl(\frac{\log2}{|Q_m|}+\frac{\eps}{2^m}\Bigr).\]
As $\limsup\limits_{m\to+\infty}\frac{1}{|Q_m|}|I_m(y)|>\tfrac{1}{2}$,
we have  $\liminf\limits_{m\to+\infty}b_m=0$.

For $m\geq 1$, let
\[J_r(\{s_i\},m)=\{(x_1,x_2)\in X\times X\colon \rho(s_ix_1,s_ix_2)\geq r,\ \forall i\geq m\},\]
and
\[J_r(\{s_i\})=\bigcup_{m=1}^{+\infty} J_r(\{s_i\},m)\]
Note that if $\rho(x_1,x_2)\geq r$ then $(x_1,x_2)\in B_1\cup B_2\cup \dotsb\cup B_{L-1}\cup B_{L+1}$.
As $Q_m\subset \{s_{m},s_{m+1},\dotsc\}$,
\begin{align*}
  J_r(\{s_i\},m)&\subset \bigcap_{g\in Q_m} g^{-1}(B_1\cup B_2\cup \dotsb\cup B_{L-1}\cup B_{L+1})
\end{align*}
and
\begin{align*}
\mu_y\times\mu_y(J_r(\{s_i\},m))&\leq
\mu_y\times\mu_y\biggl(\bigcap_{g\in Q_m} g^{-1}(B_1\cup B_2\cup \dotsb\cup B_{L-1}\cup B_{L+1})\biggr)\\
&=\mu_y\times\mu_y\biggl(\bigcap_{g\in I_m(x)} g^{-1}(B_1\cup B_2\cup \dotsb\cup B_{L-1})\biggr), \qquad\text{ by \eqref{eq:mu-x-B-L+1}}\\
&=b_m.
\end{align*}
Then $\mu_y\times\mu_y(J_r(\{s_i\}))=0$,
as $J_r(\{s_i\},m)$ is increasing as $m$ increases.
It is clear that
$X\times X\setminus J_r(\{s_i\})\subset P_r(\{s_i\})$.
Thus, $\mu_y\times\mu_y(P_r(\{s_i\}))=1$ for any $y\in E$.
This ends the proof of the Claim.
\end{proof}
Now we are ready to prove Theorem~\ref{thm:Main-result-3}
\begin{proof}[Proof of Theorem~\ref{thm:Main-result-3}]
By Lemma~\ref{lem:not-asy},
for every $k\in\mathbb{N}$, there exists a $r_k>0$ and a measurable set $D_k\subset Y$
with $\mu(D_k)>1-\frac{1}{k}$ such that for every $y\in D_k$,
$\mu_y\times \mu_y(R_{r_k}(s_i))=1$.
Let
\[P(\{s_i\})=\Bigl\{(x_1,x_2)\in X\times X\colon \liminf_{i\to+\infty}\rho(s_ix_1,s_ix_2)=0\Bigr\}.\]
It is clear that  $P(\{s_i\})=\bigcap_{k=1}^{+\infty} P_{\frac{1}{k}}(\{s_i\})$.
By Lemma~\ref{lem:proximal-r},
we know that for any $r>0$ and $\eps>0$,
there exists a measurable subset $E'$ of $Y$ with $\nu(E')>1-4\eps$ such that
for any $y\in E'$, $\mu_y\times \mu_y(P_r(\{s_i\}))=1$.
Then
\begin{align*}
	\lambda(P_r(\{s_i\}))&=\int\mu_y\times\mu_y (P_r(\{s_i\}))d\nu(y)\\
	&\geq \int_E\mu_y\times\mu_y (P_r(\{s_i\}))d\nu(y)\\
	&=\nu(E')> 1-4\eps.
\end{align*}
As $\eps>0$ is arbitrary, we have $\lambda(P_r(\{s_i\}))=1$ and thus $\lambda(P\{s_i\})=1$.
This implies that there exists a measurable set $E\subset Y$ with $\nu(E)=1$
such that for every $y\in E$, $\mu_y\times\mu_y(P(\{s_i\}))=1$.
Let
\[D=E\bigcap \biggl(\bigcup_{k=1}^{+\infty}D_k\biggr)\cap\{y\in Y\colon \mu_y\text{ is non-atomic}\}.\]
It is clear that $\nu(D)=1$.

Fix $y\in D$. There exists a $k\in\mathbb{N}$ with $y\in D_k$.
Then $\mu_y\times\mu_y(R_{r_k}(\{s_i\})\cap P(\{s_i\})=1$.
It is easy to see that both $R_{r_k}(\{s_i\})$ and $P(\{s_i\})$ are $G_\delta$ subsets of $X\times X$.
So
\[R_{r_k}(\{s_i\})\cap P(\{s_i\}\cap \bigl( \supp(\mu_y)\times\supp(\mu_y)\bigr)\]
 is a dense $G_\delta$
subset of $\supp(\mu_y)\times\supp(\mu_y)$.
As $\mu_y$ is non-atomic, $\supp(\mu_y)$ is perfect.
Applying Theorem~\ref{thm:Myc},
we get a dense Mycielski
subset $K$ of $\supp(\mu_y)$ such that
$K\times K \subset (R_{r_k}(\{s_i\})\cap P(\{s_i\}) \cup \Delta_X$.
Then $K$ is $(\{s_i\},r_k)$-scrambled, which ends the proof by setting $\delta=r_k$.
\end{proof}

\section{Final remarks}
Finally we make several remarks.
\begin{rem}
For a $\mathbb{Z}$-action system $(X,T)$
there is another approach to the proof of Theorem~\ref{thm:Main-result-3}
instead of using Lemma~\ref{lem:proximal-r}.
Let $\{s_i\}$ be a  sequence of pairwise distinct integers.
Without loss of generality, we can assume that $\{s_i\}$
is an increasing sequence of positive integers.
By \cite[Lemma 3.1]{HLY14}, for $\nu$-a.e.\ $y\in Y$,
\[ \overline{W_{\mathbb{Z}_+}^s(x,T)\cap\supp(\mu_y) }=\supp(\mu_y),\]
where $W_{\mathbb{Z}_+}^s(x,T)=\{y\in X\colon \lim_{i\to\infty}\rho(T^ix,T^y)=0\}$.
In particular,
$$Asy_{\mathbb{Z}_+}(X,T)\cap \bigl(\supp(\mu_y)\times\supp(\mu_y)\bigr)$$
is dense in $\supp(\mu_y)\times\supp(\mu_y)$.
It is clear that $Asy_{\mathbb{Z}_+}(X,T)\subset P(\{s_i\})$, then
\[ P(\{s_i\}) \cap \bigl( \supp(\mu_y)\times\supp(\mu_y)\bigr)\]
is a dense $G_\delta$
subset of $\supp(\mu_y)\times\supp(\mu_y)$.
\end{rem}

\begin{rem} \label{rem:n-chaos}
Let $n\geq 2$.
An $n$-tuple $(x_1,x_2,\dotsc,x_n)\in X^n $ is called
\emph{$(\{s_i\},\delta)$-$n$-scrambled}
if
\[\limsup_{i\to+\infty} \min_{1\leq j<k\leq n} \rho(s_i x_j,s_ix_k)\geq\delta
\quad\text{ and }\quad
\liminf_{i\to+\infty}
\max_{1\leq j<k\leq n} \rho(s_ix_j,s_ix_k)=0.\]
Following ideas in \cite{X05},
we say that a subset $K$ of $X$ is \emph{$(\{s_i\},\delta)$-$n$-scrambled}
if for every  $n$ pairwise distinct points $x_1,x_2,\dotsc,x_n\in K$,
$(x_1,x_2,\dotsc,x_n)$ is $(\{s_i\},\delta)$-$n$-scrambled.

In fact, we can require $K$ be to $(\{s_i\},\delta)$-$n$-scrambled
in Theorem~\ref{thm:Main-result-3}.
As the proof is almost the same as in Section 3,
we only outline the ideas.
Assume $\mu\in \mathcal{M}^e(X,G)$ with $h_\mu(X,G)>0$ and
$\mu=\int  \mu_y d \nu(y)$ be the disintegration of $\mu$ with respect
to the the Pinsker factor  $(Y,\mathcal{P}_\mu(G),\nu,G)$.
For $n\geq 2$, let
\[ \lambda_n=\int  \underbrace{\mu_y\times\mu_y\times\dotsb\times\mu_y}
_{n \text{ times}}d\nu(y)=
\int\mu_y^{(n)}d\nu(y).\]
Similarly as in Lemmas~\ref{lem:lambda} and \ref{lem:relative-UPE},
we have $\lambda_n\in \mathcal{M}^e(X^n,G)$,
$P_{\lambda_{n}}(G)=p^{-1}(\mathcal{P}_\mu(G))$ and
$ \lambda_n=
\int\mu_y^{(n)}d\nu(y)$ can be regard as
the disintegration of $\lambda_n$ with respect to the Pinsker factor of $(X^n,\lambda_n,G)$.

Let
\[
\Delta^{(n)}=\{(x_1,x_2,\dotsc,x_n)\in X^n\colon
\exists 1\leq j<k\leq n,\ \textrm{s.t.}\ x_j=x_k\}
\]
As  $\mu_y$ is non-atomic for $\nu$-a.e.\ $y\in Y$,
 $\lambda_n(\Delta^{(n)})=0$.
For $r>0$, let
\[\Delta^{(n)}_r=\{(x_1,x_2,\dotsc,x_n)\in X^n\colon
\exists 1\leq j<k\leq n,\ \textrm{s.t.}\
\rho(x_j,x_k)<r\}
\]
It is clear that $\Delta^{(n)}=\bigcap_{k=1}^{+\infty} \Delta^{(n)}_{\frac{1}{k}}$.
In the proof of Lemma~\ref{lem:not-asy},
using $\lambda_n$ and  $\Delta^{(n)}_r$ instead of $\lambda$ and $\Delta_r$,
we can show that for every $\eps>0$ there exists a $r>0$
and a measurable subset $D$ of $Y$ with $\nu(D)>1-4\eps$
such that
\[\mu_y^{(n)}\bigl(R_r^{(n)}(\{s_i\})\bigr)=1\]
for all $y\in D$, where
\[R_r^{(n)}(\{s_i\})=\Bigl\{(x_1,x_2,\dotsc,x_n)\in X^n\colon
\limsup_{i\to+\infty} \min_{1\leq j<k\leq n}
\rho(s_i x_j,s_ix_k)\geq r\Bigr\}.\]
Let
\[
\Delta^{n}=\{(x,x,\dotsc,x)\in X^n\colon x\in X
\}
\]
For $r>0$, let
\[\Delta^{n}_r=\{(x_1,x_2,\dotsc,x_n)\in X^n\colon
\rho(x_j,x_k)<r, 1\leq j<k\leq n\}
\]
In the proof of Lemma~\ref{lem:proximal-r},
using $\lambda_n$ and  $\Delta^{n}_r$ instead of $\lambda$ and $\Delta_r$,
we can show that
\[\lambda_n\bigl(P^{(n)}(\{s_i\})\bigr)=1,\]
where
\[P^{(n)}(\{s_i\})=\Bigl\{(x_1,x_2,\dotsc,x_n)\in X^n\colon
\liminf_{i\to+\infty}
\max_{1\leq j<k\leq n} \rho(s_ix_j,s_ix_k)=0
\Bigr\}.\]
\end{rem}

\begin{rem}\label{rem:sofic}
Sofic groups were introduced by Gromov in \cite{G99}
as a common generalization of amenable and residually finite groups.
Initiated in a breakthrough of Bowen in \cite{B10},
a substantial amount of progress has been made in expanding
the entropy theory for actions of discrete amenable groups to sofic groups.
We refer the reader to a recent book \cite{KL16} for this topic.
It is natural to ask that whether a similar result of Theorem~\ref{thm:main-result-1} holds for sofic group actions.
Since our proof depends on Theorem~\ref{thm:entorpy-UPE-extension},
it is not clear how to extend it to sofic group actions.
\end{rem}

\begin{rem}\label{rem:proof}
As we can see, Theorem~\ref{thm:main-result-1} is a purely topological result.
But our proof heavily relies on ergodic theory.
It would be interesting to know whether
there is a topological or combinatorial proof.
In \cite{KL07},  using a local analysis of combinatorial independence
of topological entropy,
Kerr and Li showed that
if an amenable group action has positive topological entropy
then it is Li-Yorke chaotic, see \cite{KL13} for sofic gorup actions.
But it is not clear how to adapt their method to our setting.
\end{rem}

\subsection*{Acknowledgements}
Part of this work was done during a visit of W. Huang and J. Li to
the Chinese University of Hong Kong.
They would like to thank Prof.\ De-Jun Feng for his warm hospitality.
We are grateful to Prof.\ Hanfeng Li  for helpful suggestions,
which lead to the Remarks \ref{rem:n-chaos}, \ref{rem:sofic} and~\ref{rem:proof}.
The authors would also like to thank the referee who made significant comments and fixed many English
errors. This research was supported in part by NNSF of China (11731003, 11771264, 12090012, 12031019) and NSF of Guangdong Province (2018B030306024).

\end{document}